\def\a{{\mathfrak{a}}} \def\b{{\mathfrak{b}}}
\def\F{{\mathbb{F}}} \def\J{{\mathcal{J}}} \def\m{{\mathfrak{m}}}\def\n{{\mathfrak{n}}} \def\Z{{\mathbb{Z}}}\def\N{{\mathbb{N}}}
\def\Q{{\mathbb{Q}}} \def\R{{\mathbb{R}}} 
  \def\Ann{{\mathrm{Ann}}}
\def\ord{{\mathrm{ord}}}
\def\Tor{\operatorname{Tor}}
\DeclareMathOperator{\pt}{c}
\theoremstyle{plain}
\newtheorem{thm}{Theorem}[section]
\newtheorem{cor}[thm]{Corollary}
\newtheorem{prop}[thm]{Proposition}
\newtheorem{conj}[thm]{Conjecture}
\newtheorem{lem}[thm]{Lemma}
\theoremstyle{definition}
\newtheorem{defn}[thm]{Definition}
\newtheorem{eg}[thm]{Example}
\theoremstyle{remark}
\newtheorem{rem}[thm]{Remark}
\newtheorem{ques}[thm]{Question}
\newtheorem*{cl}{Claim}
\newtheorem*{acknowledgement}{Acknowledgment}
\newcommand{\llbracket}{[\negthinspace[}
\newcommand{\rrbracket}{]\negthinspace]}
\begin{document}

\title{F-thresholds, tight closure, integral closure, and multiplicity bounds}
%\date{August 11, 2007}
\author[Huneke]{Craig Huneke}
\address{Department of Mathematics, University of Kansas,
Lawrence, KS 66045-7523, USA} \email{huneke@math.ku.edu}

\author[Mustata]{Mircea Musta\c{t}\u{a}}
\address{Department of Mathematics, University of
Michigan\\ Ann Arbor, MI 48109, USA} \email{mmustata@umich.edu}

\author[Takagi]{Shunsuke Takagi}
\address{Department of Mathematics, Kyushu University, 6-10-1
Hakozaki, Higashi-ku, Fukuoka-city 812-8581, Japan}
\email{stakagi@math.kyushu-u.ac.jp}

\author[Watanabe]{Kei-ichi Watanabe}
\address{Department of Mathematics, College of Humanities and Sciences,
Nihon University, Setagaya-Ku, Tokyo 156-0045, Japan}
\email{watanabe@math.chs.nihon-u.ac.jp}

\subjclass[2000]{Primary 13A35; Secondary 13B22, 13H15, 14B05}

\baselineskip 16pt \footskip = 32pt

\begin{abstract}
The F-threshold $\pt^J(\a)$ of an ideal $\a$ with respect to the
ideal $J$ is a positive characteristic invariant obtained by
comparing the powers of $\a$ with the Frobenius powers of $J$. We
show that under mild assumptions, we can detect the containment in
the integral closure or the tight closure of a parameter ideal using
F-thresholds. We formulate a conjecture bounding $\pt^J(\a)$ in
terms of the multiplicities $e(\a)$ and $e(J)$, when $\a$ and $J$
are zero-dimensional ideals, and $J$ is generated by a system of
parameters. We prove the conjecture when $J$ is a monomial ideal in
a polynomial ring, and also when $\a$ and $J$ are generated by
homogeneous systems of parameters in a Cohen-Macaulay graded
$k$-algebra.
\end{abstract}

\maketitle \markboth{C.~Huneke, M.~Musta\c{t}\u{a}, S.~Takagi, and
K.-i.~Watanabe}{F-thresholds, tight closure, integral closure, and
multiplicity bounds}

\section*{Introduction}

Let $R$ be a Noetherian ring of positive characteristic $p$. For
every ideal $\a$ in $R$, and for every ideal $J$ whose radical
contains $\a$, one can define asymptotic invariants that measure the
containment of the powers of $\a$ in the Frobenius powers of $J$.
These invariants were introduced in the case of a regular local
F-finite ring in \cite{MTW}, where it was shown that they coincide
with the jumping exponents for the generalized test ideals of Hara
and Yoshida \cite{HY}. In this paper we work in a general setting,
and show that the F-thresholds still capture interesting and subtle
information. In particular, we relate them to tight closure and
integral closure, and to multiplicities.

If $\a$ and $J$ are as above, we define for every positive integer
$e$
$$\nu^J_{\a}(p^e):=\max\{r\mid\a^r\not\subseteq J^{[p^e]}\},$$
where $J^{[q]}$ is the ideal generated by the $p^e$-powers of the
elements of $J$. We put
$$\pt^J_+(\a):=\limsup_{e\to\infty}\frac{\nu^J_{\a}(p^e)}{p^e},\,\,\pt^J_{-}(\a):=
\liminf_{e\to\infty}\frac{\nu^J_{\a}(p^e)}{p^e},$$ and if these two
limits coincide, we denote their common value by $\pt^J(\a)$, and
call it the \emph{F-threshold of $\a$ with respect to $J$}.

Our first application of this notion is to the description of the
tight closure and of the integral closure for parameter ideals.
Suppose that $(R,\m)$ is a $d$-dimensional Noetherian local ring of
positive characteristic, and that $J$ is an ideal in $R$ generated
by a full system of parameters. We show that under mild conditions,
for every ideal $I\supseteq J$, we have $I\subseteq J^*$ if and only
if $\pt^I_+(J)=d$ (and in this case $\pt^I_{-}(J)=d$, too). We
similarly show that under suitable mild hypotheses, if $I\supseteq
J$, then $I\subseteq \overline{J}$ if and only if $\pt^J_+(I)=d$.
For the precise statements, see Corollary~\ref{cor_tight} and
Theorem~\ref{integral} below.

As we have mentioned, if $R$ is regular and F-finite, then it was
shown in \cite{MTW} that the F-thresholds of an ideal $\a$ coincide
with the jumping exponents for the generalized test ideals of
\cite{HY}. In order to recover such a result in a more general
setting, we develop a notion of F-threshold for the ideal $\a$
corresponding to a submodule $N$ of a module $M$, such that
$\a^nN=0$ for some $n$. We then show that under suitable hypotheses
on a local ring $R$, one can again recover the jumping exponents for
the generalized test ideals of an ideal $\a$ in $R$ from the
F-thresholds of $\a$ with respect to pairs $(E,N)$, where $N$ is a
submodule of the injective hull $E$ of the residue field (see
Corollary~\ref{cor-F-thr}).

We study the connection between F-thresholds and multiplicity, and
formulate the following conjecture: if $(R,\m)$ is a $d$-dimensional
Noetherian local ring of characteristic $p>0$, $\a$ and $J$ are
$\m$-primary ideals in $R$, with $J$ generated by a system of
parameters, then
$$e(\a)\geq \frac{d^d}{\pt^J_{-}(\a)^d}e(J).$$
The case $J=\m$ (when $R$ is in fact regular) was proved in
\cite{TW}. We mention that in this case $\pt^{\m}(\a)$ is related
via reduction mod $p$ to a fundamental invariant in birational
geometry, the log canonical threshold ${\rm lct}(\a)$ (see
\emph{loc. cit.} for the precise relation between these two
invariants). The corresponding inequality between the multiplicity
and the log canonical threshold of $\a$ was proved in \cite{dFEM},
and plays a key role in proving that for small values of $n$, no
smooth hypersurface of degree $n$ in ${\mathbb P}^n$ is rational
(see \cite{Corti} and \cite{dFEM2}).

We prove our conjecture when both $\a$ and $J$ are generated by
homogeneous systems of parameters in a graded Cohen-Macaulay
$k$-algebra (cf. Corollary~\ref{cor_homogeneous}). Moreover, we
prove it also when $R$ is regular and
$J=(x_1^{a_1},\ldots,x_n^{a_n})$, for a regular system of parameters
$x_1,\ldots,x_n$. The proof of this latter case follows the ideas in
\cite{TW} and \cite{dFEM}, reducing to the case of a monomial ideal
$\a$, and then using the explicit interpretation of the invariants
involved in terms of the Newton polyhedron of $\a$.

 On the other hand, the proof of the homogeneous
case is based on new ideas that we expect to be useful also in
attacking the general case of the conjecture. In fact, we prove the
following stronger statement. Suppose that $\a$ and $J$ are ideals
generated by homogeneous systems of parameters in a $d$-dimensional
graded Cohen-Macaulay $k$-algebra, where $k$ is a field of
\emph{arbitrary} characteristic. If $\a^N\subseteq J$ for some $N$,
then
$$e(\a)\geq\left(\frac{d}{d+N-1}\right)^d e(J).$$

The paper is structured as follows. In the first section we recall
some basic notions of tight closure theory, and review the
definition of generalized test ideals from \cite{HY}. In \S 2 we
introduce the F-thresholds and discuss some basic properties. The
third section is devoted to the connections with tight closure and
integral closure. We introduce the F-thresholds with respect to
pairs of modules in \S 4, and relate them to the jumping exponents
for the generalized test ideals. In the last section we discuss
inequalities involving F-thresholds and multiplicities. In
particular, we state here our conjecture and prove the
above-mentioned special cases.

\section{Preliminaries}
In this section we review some definitions and notation that will be
used throughout the paper. All rings are Noetherian commutative
rings with unity. For a ring $R$, we denote by $R^{\circ}$ the set
of elements of $R$ that are not contained in any minimal prime
ideal. Elements $x_1,\ldots,x_r$ in $R$ are called \emph{parameters}
if they generate an ideal of height $r$. The integral closure of an
ideal $\a$ is denoted by $\overline{\a}$. The order of a nonzero
element $f$ in a Noetherian local ring $(R,\m)$ is the largest $r$
such that $f\in\m^r$. For a real number $u$, we denote by $\lfloor
u\rfloor$ the largest integer $\leq u$, and by $\lceil u \rceil$ the
smallest integer $\geq u$.

Let $R$ be a ring of characteristic $p > 0$, and let $F\colon R \to
R$ denote the Frobenius map which sends $x \in R$ to $x^p \in R$.
The ring $R$ viewed as an $R$-module via the $e$-times iterated
Frobenius map $F^e \colon R \to R$ is denoted by ${}^e\! R$. We say
that $R$ is {\it F-finite} if ${}^1 \! R$ is a finitely generated
$R$-module. We also say that $R$ is {\it F-pure} if the Frobenius
map is pure, that is, $F_M=1_M \otimes F \colon M=M \otimes_R R \to
M \otimes_R  {}^1 \! R$ is injective for any $R$-module $M$. For
every ideal $I$ in $R$, and for every $q=p^e$, we denote by
$I^{[q]}$ the ideal generated by the $q^{\rm th}$ powers of all
elements of $I$.

If $M$ is an $R$-module, then we put $\F^e(M):={}^e \! R \otimes_R
M$. Hence in $\F^e(M)$ we have $u\otimes (ay)=ua^{p^e}\otimes y$ for
every $a\in R$. Note that the $e$-times iterated Frobenius map
$F_M^e \colon M \to \F^e(M)$ is an $R$-linear map. The image of $z
\in M$ via this map is denoted by $z^q:= F_M^e(z)$. If $N$ is a
submodule of $M$, then we denote by
 $N_M^{[q]}$ (or simply by $N^{[q]}$) the image of the canonical map
$\F^e(N) \to \F^e(M)$ (note that if $N=I$ is a submodule of $M=R$,
then this is consistent with our previous notation for $I^{[q]}$).

First, we recall the definitions of classical tight closure and
related notions. Our references for classical tight closure theory
and for F-rational rings are \cite{HH1} and\cite{FW}, respectively;
see also the book \cite{Hu2}.

\begin{defn}
Let $I$ be an ideal in a ring $R$ of characteristic $p>0$.
\renewcommand{\labelenumi}{(\roman{enumi})}
\begin{enumerate}
\item The \textit{Frobenius closure} $I^F$ of $I$ is defined as the ideal of $R$ consisting of
all elements $x \in R$ such that $x^q \in I^{[q]}$ for some $q=p^e$.
If $R$ is F-pure, then $J=J^F$ for all ideals $J \subseteq R$.
The \textit{tight closure} $I^*$ of $I$ is defined to be the ideal
of $R$ consisting of all elements $x \in R$ for which there exists
$c \in R^{\circ}$ such that $cx^q \in I^{[q]}$ for all large
$q=p^e$.

\item We say that $c \in R^{\circ}$ is a {\it test element} if for all ideals $J \subseteq R$ and all $x \in J^*$, we have $cx^q \in I^{[q]}$ for all $q=p^e \ge 1$.
Every excellent and reduced ring $R$ has a test element.

\item If $N \subseteq M$ are $R$-modules, then
the {\it tight closure} $N^{*}_M$ of $N$ in $M$ is defined to be the
submodule of $M$ consisting of all elements $z \in M$ for which
there exists $c \in R^{\circ}$ such that $cz^q \in N^{[q]}_M$ for
all large $q = p^e$. The \textit{test ideal} $\tau(R)$ of $R$ is
defined to be $\tau(R)=\displaystyle\bigcap_{M} \Ann_R(0^{*}_M)$,
where $M$ runs over all finitely generated $R$-modules. If $M=R/I$,
then $\Ann_R(0^{*}_M)=(I:I^*)$. That is, $\tau(R) J^* \subseteq J$
for all ideals $J \subseteq R$.
%Note also that $\tau(R) \cap R^{\circ}$ is the set of all test elements of $R$.
We say that $R$ is \textit{F-regular} if $\tau(R_P)=R_P$ for all prime ideals $P$ of $R$.

\item $R$ is called \textit{F-rational} if $J^*=J$ for every ideal $J \subseteq R$ generated by parameters.
If $R$ is an excellent equidimensional  local ring, then $R$ is
F-rational if and only if $I=I^*$ for \emph{some} ideal $I$
generated by a full system of parameters for $R$.
\end{enumerate}
\end{defn}

%Recall first some notation.
We now recall the definition of $\a^t$-tight closure and of the
generalized test ideal $\tau(\a^t)$. The reader is referred to
\cite{HY} for details.

\begin{defn}\label{ta}
Let $\a$ be a fixed ideal in a reduced ring $R$ of characteristic $p>0$ such
that $\a \cap R^{\circ} \ne \emptyset$, and let $I$ be an arbitrary
ideal in $R$.
\renewcommand{\labelenumi}{(\roman{enumi})}
\begin{enumerate}
\item
Let $N \subseteq M$ be $R$-modules. Given a rational number $t \ge 0$,
the {\it $\a^t$-tight closure} $N^{*\a^t}_M$
of $N$ in $M$ is defined to be the submodule of $M$ consisting of all
elements $z \in M$ for which there exists $c \in R^{\circ}$ such that
$cz^q\a^{\lceil tq \rceil} \subseteq N^{[q]}_M$ for all large $q = p^e$.

\item
The \textit{generalized test ideal} $\tau(\a^t)$ is defined to be
$\tau(\a^t)=\displaystyle\bigcap_{M} \Ann_R(0^{*\a^t}_M)$,
 where $M$ runs through all finitely generated $R$-modules.
 If $\a=R$, then the generalized test ideal $\tau(\a^t)$ is nothing but the test ideal $\tau(R)$.

\item Assume that $R$ is an F-regular ring and that $J$
is an ideal containing $\a$ in its radical. The \textit{F-jumping
exponent} of $\a$ with respect to $J$ is defined by
$$\xi^J(\a)=\sup\{c \in \R_{\ge 0} \mid \tau(\a^c) \not\subseteq J\}.$$
If $(R,\m)$ is local, then we call the smallest F-jumping exponent
$\xi^{\m}(\a)$ the \textit{F-pure threshold} of $\a$ and denote it
by $\mathrm{fpt}(\a)$.
\end{enumerate}
\end{defn}

In characteristic zero, one defines multiplier ideals and their
jumping exponents using resolution of singularities (see Ch.~9 in
\cite{La}). It is known that for a given ideal in characteristic
zero and for a given $t$, the reduction mod $p\gg 0$ of the
multiplier ideal $\J(\a^t)$ coincides with the generalized test
ideal $\tau(\a_p^t)$ of the reduction $\a_p$ of $\a$. Therefore the
F-jumping exponent $\xi^J(\a)$ is a characteristic $p$ analogue of
jumping exponent of multiplier ideals. We refer to \cite{BMS1},
\cite{HM}, \cite{HY},  \cite{MTW} and \cite{TW} for further
discussions.

\section{Basic properties of F-thresholds}

The F-thresholds are invariants of singularities of a given ideal
$\a$ in positive characteristic, obtained by comparing the powers of
$\a$ with the Frobenius powers of other ideals. They were introduced
and studied in \cite{MTW} in the case when we work in a regular
ring. In this section, we recall the definition of F-thresholds and
study their basic properties when the ring is not necessarily
regular.

Let $R$ be a Noetherian ring of dimension $d$ and of characteristic
$p>0$. Let $\a$ be a fixed proper ideal of $R$ such that $\a \cap
R^{\circ} \ne \emptyset$. To each ideal $J$ of $R$ such that
$\a\subseteq \sqrt{J}$, we associate an F-threshold as follows. For
every $q=p^e$, let
$$\nu_{\a}^J(q):=\max\{r\in\N\vert\a^r\not\subseteq J^{[q]}\}.$$
Since $\a\subseteq \sqrt{J}$, this is a nonnegative integer (if $\a
\subseteq J^{[q]}$, then we put $\nu_{\a}^J(q)=0$). We put
$$\pt_{+}^J(\a)=\limsup_{q\to\infty}\frac{\nu_{\a}^J(q)}{q}, \quad \pt_{-}^J(\a)=\liminf_{q \to \infty}\frac{\nu_{\a}^J(q)}{q}.$$
When $\pt_{+}^J(\a)=\pt_{-}^J(\a)$, we call this limit the
\emph{F-threshold} of the pair $(R,\a)$ (or simply of $\a$) with
respect to $J$, and we denote it by $\pt^J(\a)$.

\begin{rem}\label{rem1}
(1) (cf.\! \cite[Remark 1.2]{MTW}) One has
$$0 \le \pt_-^J(\a) \le \pt_+^J(\a) < \infty.$$
In fact, if $\a$ is generated by $l$ elements and if $\a^N\subseteq
J$, then
$$\a^{N(l(p^e-1)+1)}\subseteq(\a^{[p^e]})^{N}
=(\a^N)^{[p^e]}\subseteq J^{[p^e]}.$$ Therefore $\nu_{\a}^J(p^e)\leq
N(l(p^e-1)+1)-1$. Dividing by $p^e$ and taking the limit gives
$\pt_+^J(\a)\leq Nl$.

(2) Question~{1.4} in \cite{MTW} asked whether the F-threshold
$\pt^J(\a)$ is a rational number (when it exists). A positive answer
was given in \cite{BMS1} and \cite{BMS} for a regular F-finite ring,
essentially of finite type over a field, and for every regular
F-finite ring, if the ideal $\a$ is principal. For a proof in the
case of a principal ideal in a complete regular ring (that is not
necessarily F-finite), see \cite{KLZ}. However, this question
remains open in general.
\end{rem}

Recall that a ring extension $R\hookrightarrow S$ is \emph{cyclic
pure} if for every ideal $I$ in $R$, we have $IS\cap R=I$.

\begin{prop}[\textup{cf. \cite[Proposition 1.7]{MTW}}]\label{basic}
Let $\a$, $J$ be ideals as above.
\begin{enumerate}
\item If $I\supseteq J$, then $\pt_{\pm}^I(\a) \leq \pt_{\pm}^J(\a)$.
\item If $\b \subseteq \a$, then $\pt_{\pm}^J(\b) \leq\pt_{\pm}^J(\a)$.
Moreover, if $\a\subseteq \overline{\b}$, then $\pt_{\pm}^J(\b)
 = \pt_{\pm}^J(\a)$.
\item $\pt_{\pm}^J(\a^r) = \frac{1}{r} \pt_{\pm}^J(\a)$ for every integer $r \ge 1$.
\item  $\pt_{\pm}^{J^{[q]}}(\a) =q\pt_{\pm}^J(\a)$ for every $q=p^e$.
\item If $R \hookrightarrow S$ is a cyclic pure extension, then
$$\pt_{\pm}^J(\a)=\pt_{\pm}^{JS}(\a S).$$
\item Let $R \hookrightarrow S$ be an integral extension.
If the conductor ideal $\mathfrak{c}(S/R):=\Ann_R (S/R)$ contains
the ideal $\a$ in its radical, then
$$\pt_{\pm}^J(\a)=\pt_{\pm}^{JS}(\a S).$$
\item $\pt_+^J(\a) \le c$ $($resp. $\pt_-^J(\a) \ge c)$ if and only if for every power $q_0$ of $p$, we have
$\a^{\lceil cq \rceil+q/q_0} \subseteq J^{[q]}$ $($resp. $\a^{\lceil
cq \rceil-q/q_0} \not\subseteq J^{[q]})$ for all $q=p^e \gg q_0$.
\end{enumerate}
\end{prop}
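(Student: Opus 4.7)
Parts (1)--(3) and (7) should follow by direct manipulation of the definition of $\nu^J_{\a}(q)$. For (1), the inclusion $J \subseteq I$ gives $J^{[q]} \subseteq I^{[q]}$, so $\a^r \not\subseteq I^{[q]}$ forces $\a^r \not\subseteq J^{[q]}$, yielding $\nu^I_{\a}(q) \leq \nu^J_{\a}(q)$. Part (2) is immediate from $\b \subseteq \a \Rightarrow \b^r \subseteq \a^r$. For the moreover part, I would invoke the standard fact that $\a \subseteq \overline{\b}$ implies that the Rees algebra $R[\a t]$ is module-finite over $R[\b t]$ (being integral and finitely generated as an $R$-algebra); picking homogeneous module generators of degrees at most $s$ in $t$ produces an integer $s$ with $\a^n \subseteq \b^{n-s}$ for all $n \geq s$, whence $\nu^J_{\a}(q) \leq \nu^J_{\b}(q) + s$ and the additive constant disappears in the limit. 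Statement (3) reduces to the identity $\nu^J_{\a^r}(q) = \lfloor \nu^J_{\a}(q)/r \rfloor$, coming from $(\a^r)^m = \a^{rm}$. Statement (7) is the routine translation of $\limsup$ and $\liminf$ into an $\varepsilon$--$N$ formulation with $\varepsilon = 1/q_0$, using that $q/q_0$ is an integer whenever $q \geq q_0$ and handling the ceiling function carefully.

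For (4), the identity $(J^{[q]})^{[q']} = J^{[qq']}$ yields $\nu^{J^{[q]}}_{\a}(q') = \nu^J_{\a}(qq')$ for every power $q'$ of $p$. Rewriting $\nu^{J^{[q]}}_{\a}(q')/q' = q \cdot \nu^J_{\a}(qq')/(qq')$ and noting that $\{qq' : q' \text{ a power of } p\}$ is a tail of the sequence $\{p^e\}_{e \geq 0}$, the $\limsup$ and $\liminf$ of $\nu^J_{\a}(qq')/(qq')$ along this subsequence coincide with $\pt^J_{+}(\a)$ and $\pt^J_{-}(\a)$, giving $\pt^{J^{[q]}}_{\pm}(\a) = q\,\pt^J_{\pm}(\a)$. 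For (5), cyclic purity supplies the identity $J^{[q]} S \cap R = J^{[q]}$; combined with $(JS)^{[q]} = J^{[q]} S$ and $(\a S)^r = \a^r S$, this gives the equivalence $\a^r \subseteq J^{[q]} \Leftrightarrow (\a S)^r \subseteq (JS)^{[q]}$, whence $\nu^{JS}_{\a S}(q) = \nu^J_{\a}(q)$ for every $q$.

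The main obstacle will be (6), since for a general integral extension the contraction $J^{[q]} S \cap R$ may strictly contain $J^{[q]}$, and the argument used in (5) breaks down. The easy inequality $\nu^{JS}_{\a S}(q) \leq \nu^J_{\a}(q)$ is still formal: $\a^{r} \subseteq J^{[q]} \Rightarrow \a^{r} S \subseteq J^{[q]} S = (JS)^{[q]}$. For the reverse estimate, the conductor hypothesis provides an integer $m$ with $\a^m \subseteq \mathfrak{c}(S/R)$, and starting from an inclusion $\a^{r} S \subseteq J^{[q]} S$ I would chase the chain
\[
\a^{r+m} \;\subseteq\; \a^{r+m} S \;=\; \a^m \cdot \a^{r} S \;\subseteq\; \mathfrak{c}(S/R) \cdot J^{[q]} S \;=\; J^{[q]} \cdot \mathfrak{c}(S/R) S \;\subseteq\; J^{[q]} R \;=\; J^{[q]},
\]
using the defining property $\mathfrak{c}(S/R) S \subseteq R$ of the conductor. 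This yields $\nu^{JS}_{\a S}(q) \geq \nu^J_{\a}(q) - m$, and the additive constant $m$ vanishes after dividing by $q$ and passing to $\limsup$ or $\liminf$, producing the matching inequality $\pt^{JS}_{\pm}(\a S) \geq \pt^J_{\pm}(\a)$.
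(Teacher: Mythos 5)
Your proposal is correct and matches the paper's approach throughout. The paper simply cites \cite{MTW} for parts (1)--(4) (noting the proofs there don't use regularity), while you spell out the standard arguments, but the content is the same; for (5) you use the identical contraction argument $J^{[q]}S\cap R=J^{[q]}$, for (6) the identical two-sided estimate $\nu^{JS}_{\a S}(q)\leq \nu^J_{\a}(q)\leq \nu^{JS}_{\a S}(q)+m$ via the conductor, and for (7) the identical $\varepsilon=1/q_0$ translation.
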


\begin{proof}
For (1)--(4), see \cite{MTW} (the proofs therein do not use the fact
that $R$ is regular). If $R\hookrightarrow S$ is cyclic pure, then
$\nu^{JS}_{\a S}(q)=\nu^J_{\a}(q)$ for every $q$, and we get (5).

For (6), we fix a positive integer $m$ such that $\a^m \subseteq
\mathfrak{c}(S/R)$. By the definition of the conductor ideal
$\mathfrak{c}(S/R)$, if $(\a S)^n \subseteq (JS)^{[q]}$ for some $n
\in \N$ and some $q=p^e$, then $\a^{m+n} \subseteq J^{[q]}$.
%one has $\a^{m+\nu_{\a S}^{JS}(q)} \subseteq J^{[q]}$ for all $q=p^e$.
This implies that
$$\nu_{\a S}^{JS}(q) \leq \nu_{\a}^{J}(q) \leq \nu_{\a S}^{JS}(q) +m.$$
These inequalities imply (6).

In order to prove (7), suppose first that $\pt^J_+(\a) \le c$. It
follows from the definition of $\pt^J_+(\a)$ that for every power
$q_0$ of $p$, we can find $q_1$ such that $\nu^J_\a(q)/q  <
c+\frac{1}{q_0}$ for all $q=p^e \ge q_1$. Thus, $\nu^J_{\a}(q) <
\lceil cq \rceil+\frac{q}{q_0}$, that is,
\begin{equation}\label{eq_prop_basic}
\a^{\lceil cq \rceil +q/q_0} \subseteq J^{[q]}
\end{equation}
 for all $q=p^e \ge
q_1$. Conversely, suppose that (\ref{eq_prop_basic}) holds for every
$q\geq q_1$. This implies $\nu^J_{\a}(q) \le \lceil cq \rceil
+\frac{q}{q_0}-1$. Dividing by $q$ and taking the limit gives
$\pt_+^J(\a)\leq c+\frac{1}{q_0}$. If this holds for every $q_0$, we
conclude that $\pt_+^J(\a)\leq c$. The assertion regarding
$\pt_-^J(\a)$ follows from a similar argument.
\end{proof}

We now give a variant of the definition of F-threshold. If $\a$ and
$J$ are ideals in $R$, such that $\a\cap R^{\circ}\neq\emptyset$ and
$\a\subseteq\sqrt{J}$, then we put
$$\widetilde{\nu}^J_{\a}(q):=\max \{r
\in \N \mid \a^r \not\subseteq (J^{[q]})^F\}.$$
It follows from the
 definition of
Frobenius closure that if $u\not\in (J^{[q]})^F$, then $u^p\not\in
(J^{[pq]})^F$. This means that
$$\frac{\widetilde{\nu}_{\a}^J(pq)}{pq} \ge \frac{\widetilde{\nu}_{\a}^J(q)}{q}$$
for all $q=p^e$. Thus,
$$\lim_{q \to \infty} \frac{\widetilde{\nu}_{\a}^J(q)}{q}=\sup_{q=p^e}\frac{\widetilde{\nu}_{\a}^J(q)}{q}.$$
We denote this limit by $\widetilde{\pt}^{J}(\a)$. Note that we have
$\widetilde{\pt}^J(\a)\leq\pt_{-}^J(\a)$.

The F-threshold $\pt^J(\a)$ exists in many cases.

\begin{lem}\label{lem1}
Let $\a$, $J$ be as above.
\begin{enumerate}
\item If $J^{[q]}=(J^{[q]})^F$ for all large $q=p^e$, then the F-threshold $\pt^J(\a)$ exists, that is, $\pt_{+}^J(\a)=\pt_{-}^J(\a)$. In particular, if $R$ is F-pure, then $\pt^J(\a)$ exists.
\item If the test ideal $\tau(R)$ contains $\a$ in its radical, then the F-threshold $\pt^J(\a)$ exists and $\pt^J(\a)=\pt^{J^*}(\a)$.
\item If $\a$ is principal, then $c^J(\a)$ exists.
\end{enumerate}
\end{lem}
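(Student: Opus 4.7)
The plan is to handle the three parts in sequence, all by comparing $\nu^J_\a(q)$ to auxiliary sequences that are visibly well-behaved, and using the inequality $\widetilde{\nu}^J_\a(q) \le \nu^J_\a(q)$ noted just before the lemma, together with the fact that $\widetilde{\nu}^J_\a(q)/q$ converges as it is monotone nondecreasing.

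For (1), the argument is essentially immediate. The hypothesis $J^{[q]}=(J^{[q]})^F$ for $q\gg 0$ forces $\widetilde{\nu}^J_\a(q)=\nu^J_\a(q)$ for all such $q$, so $\nu^J_\a(q)/q$ converges to $\widetilde{c}^J(\a)$. The F-pure case then follows because F-purity gives $I^F=I$ for every ideal $I$, as recorded in the definition of Frobenius closure.

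For (2), the key inputs are the standard facts that $\tau(R)\cdot I^\ast\subseteq I$ for every ideal $I$, and $(J^\ast)^{[q]}\subseteq (J^{[q]})^\ast$ (the latter follows directly from the definition of tight closure: a test element witnessing $x\in J^\ast$ also witnesses $x^q\in (J^{[q]})^\ast$). Choosing $N$ with $\a^N\subseteq\tau(R)$, which exists because $\a\subseteq\sqrt{\tau(R)}$, these yield
$$\a^N (J^{[q]})^F\subseteq \a^N (J^{[q]})^\ast\subseteq J^{[q]}\qquad\text{and}\qquad \a^N (J^\ast)^{[q]}\subseteq \a^N(J^{[q]})^\ast\subseteq J^{[q]}.$$
Combined with the trivial inclusions $J^{[q]}\subseteq (J^{[q]})^F$ and $J^{[q]}\subseteq (J^\ast)^{[q]}$, I get
$$\widetilde{\nu}^J_\a(q)\le \nu^J_\a(q)\le \widetilde{\nu}^J_\a(q)+N\qquad\text{and}\qquad \nu^{J^\ast}_\a(q)\le \nu^J_\a(q)\le \nu^{J^\ast}_\a(q)+N.$$
Dividing by $q$ and letting $q\to\infty$, the first string forces convergence of $\nu^J_\a(q)/q$ to $\widetilde{c}^J(\a)$, and the second forces $c^J(\a)=c^{J^\ast}(\a)$.

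For (3), with $\a=(f)$ principal, I exploit the Frobenius compatibility $(J^{[q]})^{[p]}=J^{[pq]}$: if $f^s\in J^{[q]}$ then $f^{ps}=(f^s)^p\in J^{[pq]}$. Since the sets $\{r:f^r\in J^{[q]}\}$ are upward closed in $r$, this translates to the submultiplicativity
$$\nu^J_\a(pq)\le p\,\nu^J_\a(q)+(p-1).$$
Writing $a_e:=\nu^J_\a(p^e)/p^e$, this says $a_{e+1}\le a_e+(p-1)/p^{e+1}$. Telescoping gives $a_{e+k}\le a_e + 1/p^e$ for every $k\ge 0$, hence $\limsup_n a_n\le a_e+1/p^e$ for every $e$, and letting $e\to\infty$ yields $\limsup a_n\le \liminf a_n$, i.e.\ convergence. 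The main thing to watch is that the inequality goes the correct way (the sequence is only \emph{almost} decreasing, not monotone), and that the telescoped error $\sum_{j\ge 1}(p-1)/p^{e+j}=1/p^e$ does go to zero. No part of this argument uses anything about $J$ beyond $\a\subseteq\sqrt J$ (needed so that $\nu^J_\a(q)$ is finite), so I get the principal case unconditionally. The genuinely subtle step overall is the tight closure containment in (2), but it is a short formal check; the rest is bookkeeping.
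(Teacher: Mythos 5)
Your proof is correct and follows essentially the same strategy as the paper's: all three parts hinge on sandwiching $\nu^J_\a(q)$ against visibly convergent auxiliary sequences, using $\widetilde{\nu}$ and the test ideal for (1)--(2) and the Frobenius compatibility $\a^{pr}\subseteq J^{[pq]}$ for (3). The minor organizational differences — in (2) you run two separate sandwich chains rather than the paper's single nested chain $\widetilde{\nu}^{J^*}_\a(q)\le\nu^{J^*}_\a(q)\le\nu^J_\a(q)\le\widetilde{\nu}^{J^*}_\a(q)+2m$, and in (3) you telescope the almost-monotone sequence rather than observing (as the paper does) that your inequality $\nu^J_\a(pq)\le p\,\nu^J_\a(q)+(p-1)$ is exactly monotonicity of the shifted sequence $(\nu^J_\a(q)+1)/q$ — do not change the substance of the argument.
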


\begin{proof}
(1) follows from the previous discussion since in that case we have
$\widetilde{\nu}^J_{\a}(q)=\nu^J_{\a}(q)$ for all $q\gg 0$.

In order to prove (2), we take an integer $m \ge 1$ such that $\a^m
\subseteq \tau(R)$. Then, by the definition of $\tau(R)$, one has
$\a^{2m} ((J^*)^{[q]})^F  \subseteq \a^m (J^*)^{[q]} \subseteq
J^{[q]}$ for all $q=p^e$. This means that
$$\widetilde{\nu}_{\a}^{J^*}(q) \le \nu_{\a}^{J^*}(q) \le \nu_{\a}^J(q) \le \widetilde{\nu}_{\a}^{J^*}(q)+2m.$$
Since $\widetilde{\pt}^{J^*}(\a)$ always exists, $\pt^J(\a)$ and
$\pt^{J^*}(\a)$ also exist and these three limits are all equal.

For (3), note that if $\a$ is principal and $\a^r\subseteq J^{[q]}$,
then $a^{pr}\subseteq J^{[pq]}$. Therefore we have
$$\frac{\nu^J_{\a}(pq)+1}{pq}\leq\frac{\nu^J_{\a}(q)+1}{q}$$
for every $q=p^e$. This implies that
$$\lim_{q\to\infty}\frac{\nu^J_{\a}(q)}{q}=\lim_{q\to\infty}\frac{\nu^J_{\a}(q)+1}{q}
=\inf_{q=p^e}\frac{\nu^J_{\a}(q)}{q}.$$

\end{proof}

As shown in \textup{\cite[Proposition 2.7]{MTW}}, the F-threshold
$\pt^J(\a)$ coincides with the F-jumping exponent $\xi^J(\a)$ when
the ring is $F$-finite and regular. The statement in \emph{loc.
cit.} requires the ring to be local, however the proof easily
generalizes to the non-local case (see \cite{BMS}). More precisely,
we have the following

\begin{prop}\label{regular}
Let $R$ be an F-finite regular ring of characteristic $p>0$. If $\a$
is a nonzero ideal contained in the radical of $J$, then
$\tau(\a^{\pt^J(\a)}) \subseteq J$. Going the other way, if $\alpha
\in \R_{+}$, then $\a$ is contained in the radical of
$\tau(\a^{\alpha})$ and $\pt^{\tau(\a^{\alpha})}(\a) \leq \alpha$.
In particular, the F-threshold $\pt^J(\a)$ coincides with the
F-jumping exponent $\xi^J(\a)$.
\end{prop}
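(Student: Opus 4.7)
The plan is to adapt the argument of [MTW, Proposition~2.7], which handled the case of a local F-finite regular ring, to the non-local setting. The central technical input, established in [BMS] in the required generality, is the explicit description of the generalized test ideal in an F-finite regular ring: for every $t > 0$, one has $\tau(\a^t) = (\a^{\lceil tp^e \rceil})^{[1/p^e]}$ for all $e$ sufficiently large, where $I^{[1/q]}$ denotes the smallest ideal $K$ such that $I \subseteq K^{[q]}$. In particular the function $t \mapsto \tau(\a^t)$ is right-continuous. Since $R$ is F-pure (by Kunz's theorem), Lemma~\ref{lem1}(1) guarantees that $\pt^J(\a)$ exists in the first place.

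For the first inclusion, set $c := \pt^J(\a)$ and fix an arbitrary $c' > c$. By Proposition~\ref{basic}(7), $\a^{\lceil c'q \rceil} \subseteq J^{[q]}$ for all $q = p^e \gg 0$. The smallest-ideal characterization of $(\cdot)^{[1/q]}$ then gives $(\a^{\lceil c'q \rceil})^{[1/q]} \subseteq J$; taking $q$ large enough that this ideal also equals $\tau(\a^{c'})$, we deduce $\tau(\a^{c'}) \subseteq J$. Right-continuity of $t \mapsto \tau(\a^t)$ then permits us to let $c' \downarrow c$ and conclude $\tau(\a^c) \subseteq J$.

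For the second statement, fix $\alpha > 0$. The explicit description gives $\a^{\lceil \alpha q \rceil} \subseteq \tau(\a^\alpha)^{[q]}$ for $q \gg 0$, and since any ideal $\mathfrak{b}$ satisfies $\mathfrak{b}^{[q]} \subseteq \mathfrak{b}$, this forces $\a^{\lceil \alpha q \rceil} \subseteq \tau(\a^\alpha)$, so $\a \subseteq \sqrt{\tau(\a^\alpha)}$. The same containment yields $\nu^{\tau(\a^\alpha)}_\a(q) \leq \lceil \alpha q \rceil - 1$, and dividing by $q$ and passing to the limit supremum gives $\pt^{\tau(\a^\alpha)}_+(\a) \leq \alpha$. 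For the final assertion, the first inclusion yields $\xi^J(\a) \leq \pt^J(\a)$ (since $\tau(\a^{\pt^J(\a)}) \subseteq J$). Conversely, for any $\epsilon > 0$ the definition of $\xi^J(\a)$ forces $\tau(\a^{\xi^J(\a) + \epsilon}) \subseteq J$; combining Proposition~\ref{basic}(1) with the second statement applied at $\alpha = \xi^J(\a) + \epsilon$, we get $\pt^J(\a) \leq \pt^{\tau(\a^{\xi^J(\a)+\epsilon})}(\a) \leq \xi^J(\a) + \epsilon$. Letting $\epsilon \to 0$ completes the argument.

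The main obstacle is conceptual rather than computational: it is the invocation of the formula $\tau(\a^t) = (\a^{\lceil tp^e \rceil})^{[1/p^e]}$ together with its eventual stabilization and the resulting right-continuity, in the \emph{non-local} F-finite regular case. This is the content of [BMS]; granted it, the three steps above are direct bookkeeping using the defining inequalities for $\nu^J_\a(q)$ and the formal properties collected in Proposition~\ref{basic}.
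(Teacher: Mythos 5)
Your proposal is correct and takes essentially the same route as the paper, which does not reproduce the argument but refers to \cite{MTW} (Proposition~2.7) for the local case and to \cite{BMS} for the passage to arbitrary F-finite regular rings. You have correctly identified the key input from \cite{BMS} (the stabilizing formula $\tau(\a^t)=(\a^{\lceil tp^e\rceil})^{[1/p^e]}$ and the resulting right-continuity of $t\mapsto\tau(\a^t)$), and the ensuing bookkeeping with Proposition~\ref{basic}(1) and (7) is sound.
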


\begin{rem}\label{remark_equality}
The F-threshold $\pt^J(\a)$ sometimes coincide with the F-jumping
exponent $\xi^J(\a)$ even when $R$ is singular. For example, let
$R=k[[X,Y,Z,W]]/(XY-ZW)$, and let  $\m$ be the maximal ideal of $R$.
Then the F-threshold $\pt^{\m}(\m)$ of $\m$ with respect to $\m$ and
the F-pure threshold (that is, the smallest F-jumping exponent)
$\mathrm{fpt}(\m)$ of $\m$ are both equal to two.

However, $\pt^J(\a)$ does not agree with $\xi^J(\a)$ in general. For
example, let $R=k[[X,Y,Z]]/(XY-Z^2)$ be a rational double point of
type $A_1$ over a field $k$ of characteristic $p >2$ and let $\m$ be
the maximal ideal of $R$. Then $\mathrm{fpt}(\m)=1$ (see
\cite[Example 2.5]{TW}), whereas $\pt^{\m}(\m)=3/2$.
\end{rem}

\begin{rem}\label{localization}
Suppose that $\m$ is a maximal ideal in any Noetherian ring $R$, and
that $J$ is an $\m$-primary ideal. For every $q=p^e$ we have
$J^{[q]}R_{\m}\cap R=J^{[q]}$, hence for every ideal $\a\subseteq\m$
we have $\nu^J_{\a}(q)=\nu^{JR_{\m}}_{\a R_{\m}}(q)$. In particular,
$\pt^J_{\pm}(\a)=\pt^{JR_{\m}}_{\pm}(\a R_{\m})$.
\end{rem}

\begin{eg}\label{maximal-threshold}
\begin{enumerate}
\item[(i)] Let $R$ be a Noetherian local ring of characteristic $p>0$,
and let $J=(x_1,\ldots,x_d)$, where $x_1,\ldots,x_d$ form a full
system of parameters in $R$. It follows from the Monomial Conjecture
(which is a theorem in this setting, see \cite[Prop. 3]{Ho}) that
$(x_1\cdots x_d)^{q-1}\not\in J^{[q]}$ for every $q$. Hence
$\nu^J_J(q)\geq d(q-1)$ for every $q$, and therefore
$\pt_{-}^J(J)\geq d$. On the other hand, $\pt_+^J(J)\leq d$ by
Remark~\ref{rem1} (1), and we conclude that $\pt^J(J)=d$.

\item[(ii)] Let  $R=k[x_1, \dots, x_d]$ be a $d$-dimensional polynomial ring
over a field $k$ of characteristic $p>0$, and let $\a, J \subseteq
R$ be zero-dimensional ideals generated by monomials. In order to
compute $c^J(\a)$ we may assume that $k$ is perfect, hence we may
use Proposition~\ref{regular}.

Let $P(\a)\subseteq \R_{\ge 0}^d$ denote the Newton polyhedron of
$\a$, that is $P(\a)$ is the convex hull of those
$u=(u_1,\ldots,u_n)\in\N^n$ such that $x^u=x_1^{u_1}\cdots
x_n^{u_n}\in\a$. It follows from \cite[Thm. 6.10]{HY} that
$$\tau(\a^{c})=(x^u\mid u+e\in {\rm Int}(c\cdot P_{\a})),$$
where $e=(1,1,\ldots,1)$. We deduce that if $\lambda(u)$ is defined
by the condition $u+e\in \partial(\lambda(u)\cdot P(\a))$, then
$$\pt^J(\a)=\max\{\lambda(u)\mid u\in\N^n, x^u\not\in J\}$$
(note that since $J$ is zero-dimensional, this maximum is over a
finite set). In particular, we see that if
$J=(x_1^{a_1},\ldots,x_n^{a_n})$, then $\pt^J(\a)$ is characterized
by $a=(a_1,\ldots,a_n)\in\partial(c^J(\a)\cdot P(\a))$.

\item[(iii)] Let $(R,\m)$ be a $d$-dimensional regular local ring of
characteristic $p>0$, and let $J \subset R$ be an $\m$-primary
ideal. We claim that
\begin{equation}\label{eq_example}
\pt^J(\m)=\max\{r \in \Z_{\ge 0} \mid \m^r \not\subseteq J\}+d.
\end{equation}
 In particular, $\pt^J(\m)$ is an integer $\geq d$.

Indeed, if $u\not\in J$, then $(J\colon u)\subseteq\m$, hence
$J^{[q]}\colon u^q=(J\colon u)^{[q]}\subseteq\m^{[q]}$, and
therefore $u^q\m^{d(q-1)}\not\subseteq J^{[q]}$. If $u\in\m^r$, it
follows that $\nu^J_{\m}(q)\geq rq+d(q-1)$. Dividing by $q$ and
passing to the limit gives $c^J(\m)\geq r+d$, hence we have "$\geq$"
in (\ref{eq_example}). For the reverse inequality, note that if
$\m^{r+1}\subseteq J$, then
$$\m^{(r+d)q}\subseteq(\m^{r+1})^{[q]}\subseteq J^{[q]}$$
for every $q=p^e$. Hence $\nu^J_{\m}(q)\leq (r+d)q-1$ for all $q$,
and we get $c^J(\m)\leq r+d$.
\end{enumerate}
\end{eg}

\section{Connections with tight closure and integral closure}

\begin{thm}\label{tight}
Let $(R,\m)$ be an excellent analytically irreducible Noetherian
local domain of positive characteristic $p$. Set $d=\dim(R)$, and
let $J=(x_1, \dots, x_d)$ be an ideal generated by a full system of
parameters in $R$, and let $I\supseteq J$ be another ideal. Then $I$
is not contained in the tight closure $J^*$ of $J$ if and only if
there exists $q_0=p^{e_0}$ such that $x^{q_0-1} \in I^{[q_0]}$,
where $x=x_1x_2 \cdots x_d$.
\end{thm}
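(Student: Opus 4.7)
The plan is to prove each direction using tight-closure machinery specific to excellent analytically irreducible local domains of characteristic $p$: the existence of a completely stable test element, Hochster--Huneke colon-capturing for parameter ideals, Karen Smith's theorem that $J^* = JR^+ \cap R$ (where $R^+$ denotes the absolute integral closure of $R$), and the Krull intersection theorem. Throughout, abbreviate $x := x_1 \cdots x_d$.

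\emph{Proof of $(\Leftarrow)$.} Argue by contradiction: suppose $x^{q_0 - 1} \in I^{[q_0]}$ and $I \subseteq J^*$. Taking $q_0$-th Frobenius powers, $I^{[q_0]} \subseteq (J^*)^{[q_0]} \subseteq (J^{[q_0]})^*$, so $x^{q_0-1} \in (J^{[q_0]})^*$. Unwinding, there exists $c \in R^\circ$ with $c \cdot x^{(q_0 - 1) Q} \in J^{[q_0 Q]}$ for every $Q = p^e$. The iterated Hochster--Huneke colon-capturing for parameter ideals in an excellent analytically irreducible local domain gives
\[
J^{[q_0 Q]} :_R x^{(q_0 - 1) Q} \;\subseteq\; \bigl(J^{[Q]}\bigr)^*,
\]
so $c \in (J^{[Q]})^*$ for every $Q$. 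Applying a completely stable test element $c_0 \in R^\circ$ yields $c_0 c \in J^{[Q]}$ for every $Q$, hence
\[
c_0 c \;\in\; \bigcap_Q J^{[Q]} \;\subseteq\; \bigcap_n \m^n \;=\; 0
\]
by Krull intersection. Since $R$ is a domain and $c_0 \neq 0$, this forces $c = 0$, contradicting $c \in R^\circ$.

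\emph{Proof of $(\Rightarrow)$.} Pick $a \in I \setminus J^*$. By Karen Smith's theorem (valid in the excellent analytically irreducible setting), $J^* = JR^+ \cap R$, so $a \notin JR^+$. Since $R^+$ is a big Cohen--Macaulay $R$-algebra, the sequence $x_1, \ldots, x_d$ is regular on $R^+$, giving for every $q_0 = p^e$ the colon identity
\[
(x_1^{q_0}, \ldots, x_d^{q_0}) R^+ \;:_{R^+}\; x^{q_0 - 1} \;=\; J R^+.
\]
Multiplication by $x^{q_0-1}$ therefore induces an injective $R^+$-linear map $\phi: R^+/JR^+ \hookrightarrow R^+/J^{[q_0]} R^+$ sending $\bar b \mapsto \overline{b \, x^{q_0-1}}$. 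Using that $\bar a \neq 0$ in $R^+/JR^+$, together with the Frobenius structure of $R^+$ (in particular the existence of $p^e$-th roots of every element), one produces an integer $q_0$ and $\gamma \in R^+$ with
\[
x^{q_0 - 1} \;-\; \gamma\, a^{q_0} \;\in\; J^{[q_0]} R^+,
\]
that is, $x^{q_0 - 1} \in (a^{q_0}, x_1^{q_0}, \ldots, x_d^{q_0}) R^+ \subseteq I^{[q_0]} R^+$. Since $R \hookrightarrow R^+$ is cyclically pure in characteristic $p$, one concludes $x^{q_0 - 1} \in I^{[q_0]} R^+ \cap R = I^{[q_0]}$, as required.

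The main obstacle lies in the $(\Rightarrow)$ direction: extracting a specific $q_0$ and $\gamma \in R^+$ from the mere condition $a \notin JR^+$. Because $R^+$ is non-Noetherian and not local, one cannot directly appeal to a socle-generator-in-Gorenstein-quotient argument, which succeeds only when $R$ is regular; even in the Cohen--Macaulay Gorenstein case the socle of $R/J^{[q_0]}$ need not be generated by $x^{q_0-1}$. The argument must therefore carefully exploit the regular-sequence structure of $x_1,\ldots,x_d$ on $R^+$, cyclic purity of $R \hookrightarrow R^+$, and likely a descent to a suitable module-finite domain extension $R \subseteq S \subseteq R^+$ in which the needed relation between $a^{q_0}$ and $x^{q_0-1}$ is witnessed concretely.
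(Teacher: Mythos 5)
Your proof of the implication $(\Leftarrow)$ is correct and, modulo notation, is the same as the paper's: Frobenius plus colon-capturing shows that a test element lands in $\bigcap_Q J^{[Q]}=0$, giving the contradiction. The paper applies colon-capturing once (writing $c\in J^{[qq_0]}:x^{q(q_0-1)}\subseteq(J^{[q]})^*$ for a test element $c$, then squaring $c$); you phrase it via $(J^{[q_0]})^*$ and a second test element $c_0$, but the mechanism is identical.

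The implication $(\Rightarrow)$, however, is not a proof. You write that ``one produces an integer $q_0$ and $\gamma\in R^+$ with $x^{q_0-1}-\gamma a^{q_0}\in J^{[q_0]}R^+$'' and then acknowledge in the final paragraph that extracting such $q_0,\gamma$ from $a\notin JR^+$ is the ``main obstacle'' which you do not resolve. This is exactly the content of the theorem, so the argument is a sketch of an approach, not a proof. Worse, the closing step is false as stated: $R\hookrightarrow R^+$ is \emph{not} cyclically pure for a general complete local domain $R$ of positive characteristic. If it were, then $J^+ = JR^+\cap R = J$ for every parameter ideal $J$, and Smith's theorem $J^+=J^*$ would force $R$ to be F-rational, which is far from being assumed. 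So even if the relation in $R^+$ were produced, one could only conclude $x^{q_0-1}\in I^{[q_0]}R^+\cap R$, which in general strictly contains $I^{[q_0]}$ and does not yield the theorem's conclusion.

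The paper's route to $(\Rightarrow)$ is different and completes cleanly. Given $f\in I\setminus J^*$, one passes to the complete subring $B=k\llbracket x_1,\ldots,x_d,f\rrbracket\subseteq R$, a hypersurface and hence Gorenstein; persistence of tight closure gives $f\notin ((x_1,\ldots,x_d)B)^*$, and producing $x^{q_0-1}\in((x_1,\ldots,x_d,f)B)^{[q_0]}$ suffices since $(x_1,\ldots,x_d,f)B\subseteq I$. In the Gorenstein ring $B$ one then invokes Aberbach's theorem, $J^{[q]}:I^{[q]}\subseteq\m^{n(q)}$ with $n(q)\to\infty$, to find $q_0$ with $J^{[q_0]}:I^{[q_0]}\subseteq J$, and concludes by linkage: $x^{q_0-1}\in J^{[q_0]}:J\subseteq J^{[q_0]}:(J^{[q_0]}:I^{[q_0]})=I^{[q_0]}$. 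Note that your stated objection to a Gorenstein reduction---that $x^{q_0-1}$ need not generate the socle of $R/J^{[q_0]}$---does not apply here: the argument uses only $x^{q_0-1}\in J^{[q_0]}:J$ (true in any ring) together with Aberbach's theorem and Gorenstein duality, never a socle-generator claim.
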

\begin{proof}
After passing to completion, we may assume that $R$ is a complete
local domain. Suppose first that $x^{q_0-1} \in I^{[q_0]}$, and by
way of contradiction suppose also that $I \subseteq J^*$. Let $c \in
R^{\circ}$ be a test element. Then for all $q=p^e$, one has
$cx^{q(q_0-1)} \in cI^{[qq_0]} \subset J^{[qq_0]}$, so that $c \in
J^{[qq_0]}:x^{q(q_0-1)} \subseteq (J^{[q]})^*$, by colon-capturing \cite[Theorem 7.15a]{HH1}.
Therefore $c^2$ lies in $\bigcap_{q=p^e}J^{[q]}=(0)$, a
contradiction.

Conversely, suppose that $I \nsubseteq J^*$, and choose an element
$f \in I \smallsetminus J^*$. We choose a coefficient field $k$, and
let $B=k[[x_1, \dots, x_d,f]]$ be the complete subring of $R$
generated by $x_1, \dots, x_d, f$. Note that $B$ is a hypersurface
singularity, hence Gorenstein. Furthermore, by persistence of tight
closure \cite[Lemma 4.11a]{HH1}, $f \notin ((x_1, \dots, x_d)B)^*$. If we prove that there
exists $q_0=p^{e_0}$ such that $x^{q_0-1} \in ((x_1, \dots, x_d,
f)B)^{[q_0]}$, then clearly $x^{q_0-1}$ is also in $I^{[q_0]}$.
Hence we can reduce to the case in which $R$ is Gorenstein. Since $I
\not\subseteq J^*$, it follows from a result of Aberbach \cite{Ab}
that $J^{[q]}\colon I^{[q]} \subseteq \m^{n(q)}$, where $n(q)$ is a
positive integer with $\lim_{q\to\infty}n(q)=\infty$. In particular,
we can find $q_0=p^{e_0}$ such that $J^{[q_0]}:I^{[q_0]} \subseteq
J$. Therefore $x^{q_0-1} \in J^{[q_0]}\colon J \subseteq
J^{[q_0]}\colon (J^{[q_0]}\colon I^{[q_0]})=I^{[q_0]}$, where the
last equality follows from the fact that $R$ is Gorenstein.
\end{proof}

\begin{cor}\label{cor_tight}
Let $(R,\m)$ be a $d$-dimensional excellent analytically irreducible
Noetherian local domain of characteristic $p>0$, and let $J=(x_1,
\dots, x_d)$ be an ideal generated by a full system of parameters in
$R$. Given an ideal $I \supseteq J$, we have $I \subseteq J^*$ if
and only if $\pt^I_+(J)=d$ (and in this case $c^I(J)$ exists). In
particular, $R$ is F-rational if and only if $\pt^I_+(J) < d$ for
every ideal $I \supsetneq J$.
\end{cor}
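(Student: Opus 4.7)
The plan is to combine Theorem \ref{tight} with the computation $\pt^J(J)=d$ from Example \ref{maximal-threshold}(i) and the monotonicity in Proposition \ref{basic}(1). Throughout I write $x = x_1 \cdots x_d$. The universal upper bound $\pt^I_+(J) \le \pt^J_+(J) = d$ follows at once from Proposition \ref{basic}(1), valid since $I \supseteq J$, together with Example \ref{maximal-threshold}(i).

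For the ``only if'' direction, assume $I \subseteq J^*$. Theorem \ref{tight} then forces $x^{q-1} \notin I^{[q]}$ for every $q = p^e$. Since $x^{q-1} \in J^{d(q-1)}$, we get $J^{d(q-1)} \not\subseteq I^{[q]}$, hence $\nu^I_J(q) \ge d(q-1)$. Dividing by $q$ and taking the liminf yields $\pt^I_-(J) \ge d$, which combined with the upper bound shows that $\pt^I(J)$ exists and equals $d$.

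For the converse I argue by contrapositive: suppose $I \not\subseteq J^*$, so Theorem \ref{tight} supplies some $q_0 = p^{e_0}$ with $x^{q_0-1} \in I^{[q_0]}$. Raising this containment to the $(q/q_0)$-th Frobenius power gives
$$x^{q - q/q_0} \;=\; (x^{q_0-1})^{q/q_0} \;\in\; (I^{[q_0]})^{[q/q_0]} \;=\; I^{[q]}$$
for every $q = p^e$ with $e \ge e_0$, and therefore $I^{[q]} \supseteq K_q := J^{[q]} + (x^{q-q/q_0})$. A monomial generator $x_1^{a_1}\cdots x_d^{a_d}$ of $J^r$ belongs to $K_q$ as soon as either some $a_i \ge q$ (placing it in $J^{[q]}$) or every $a_i \ge q - q/q_0$ (placing it in the principal summand). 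A pigeonhole count bounds the largest $r$ admitting a monomial violating both conditions by $(d-1)(q-1) + (q-q/q_0-1) = dq - d - q/q_0$, so $J^r \subseteq K_q \subseteq I^{[q]}$ whenever $r \ge dq - d - q/q_0 + 1$. This yields $\nu^I_J(q) \le dq - d - q/q_0$ for all $q = p^e \ge q_0$, and dividing by $q$ gives $\pt^I_+(J) \le d - 1/q_0 < d$. I expect the combinatorial bound together with its interaction with Frobenius powers to be the only substantive step; the rest is a concatenation of the cited ingredients.

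Finally, the F-rationality statement is immediate: since $R$ is excellent and equidimensional (being a domain), the characterization recalled in Section 1 says $R$ is F-rational iff $J^* = J$. But $J^* = J$ precisely when no $I \supsetneq J$ lies in $J^*$, which by the main equivalence is equivalent to $\pt^I_+(J) < d$ for every such $I$.
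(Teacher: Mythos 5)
Your proof is correct and follows essentially the same route as the paper: both directions rest on Theorem~\ref{tight} and the same combinatorial pigeonhole bound on monomials in $x_1,\ldots,x_d$ lying outside $J^{[q]}+(x^{q-q/q_0})$. The only cosmetic difference is that the paper packages the Frobenius-power step through an auxiliary ideal $\b=(x_1^{q_0},\ldots,x_d^{q_0},x^{q_0-1})\subseteq I^{[q_0]}$ and Proposition~\ref{basic}(4), whereas you compute directly with $K_q=\b^{[q/q_0]}\subseteq I^{[q]}$; the estimates are identical.
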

\begin{proof}
Note first that by Remark~\ref{rem1} (1), for every $I\supseteq J$
we have $\pt_+^J(I)\leq d$. Suppose now that $I\subseteq J^*$. It
follows from Theorem~\ref{tight} that $J^{d(q-1)}\not\subseteq
I^{[q]}$ for every $q=p^e$. This gives $\nu^I_J(q)\geq d(q-1)$ for
all $q$, and therefore $\pt^I_{-}(J)\geq d$. We conclude that in
this case $\pt^I_+(J)=\pt^I_{-}(J)=d$.

Conversely, suppose that $I \not\subseteq J^*$. By Theorem
\ref{tight}, we can find $q_0=p^{e_0}$ such that
$$\b:=(x_1^{q_0},\ldots,x_d^{q_0},(x_1\cdots x_d)^{q_0-1})\subseteq I^{[q_0]}.$$
If $(x_1,\ldots,x_d)^r\not\subseteq\b^{[q]}$, then
$$r\leq (qq_0-1)(d-1)+q(q_0-1)-1=qq_0d-q-d.$$
Therefore $\nu^{\b}_J(q)\leq qq_0d-q-d$ for every $q$, which implies
$\pt^{\b}(J)\leq q_0d-1$. Since $q_0$ is a fixed power of $p$, we
deduce
$$\pt_+^I (J) =\frac{1}{q_0}\pt_+^{I^{[q_0]}}(J) \le \frac{1}{q_0} \pt^{\b}(J)\leq d-\frac{1}{q_0}<d.$$
\end{proof}

\begin{thm}\label{integral}
Let $(R, \m)$ be a $d$-dimensional formally equidimensional
Noetherian local ring of characteristic $p>0$. If $I$ and $J$ are
ideals in $R$, with $J$ generated by a full system of parameters,
then
\begin{enumerate}
\item $\pt_+^J(I)\leq d$ if and only if $I\subseteq \overline{J}$.
\item If, in addition, $J\subseteq I$, then $I\subseteq\overline{J}$ if and only
if $\pt^J_+(I)=d$. Moreover, if these equivalent conditions hold,
then $\pt^J(I)=d$.
\end{enumerate}
\end{thm}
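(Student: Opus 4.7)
I would split the proof into Part~(1) and its two directions, with Part~(2) following as an easy consequence.

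For the ``only if'' direction of Part~(1), namely $I\subseteq\overline{J}\Rightarrow\pt^{J}_{+}(I)\le d$: since $J\subseteq\overline{J}$, we have $I+J\subseteq\overline{J}$, so $J$ is a reduction of $I+J$. There is therefore an integer $n_{0}$ with $(I+J)^{n_{0}+k}=J^{k}(I+J)^{n_{0}}$ for every $k\ge 0$, and in particular $I^{n_{0}+k}\subseteq J^{k}$. A pigeonhole argument on the $d$ generators of $J$ shows that $J^{d(q-1)+1}\subseteq J^{[q]}$ for every $q=p^{e}$. Combining these, $I^{n_{0}+d(q-1)+1}\subseteq J^{[q]}$, hence $\nu_{I}^{J}(q)\le n_{0}+d(q-1)$, and dividing by $q$ yields $\pt^{J}_{+}(I)\le d$.

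For the converse $\pt^{J}_{+}(I)\le d\Rightarrow I\subseteq\overline{J}$, the plan is to replace $I$ by $I+J$ (which does not change membership in $\overline{J}$, since $J\subseteq\overline{J}$), so one may assume $J\subseteq I$ with $I$ an $\m$-primary ideal, and then invoke Rees's theorem: for $\m$-primary $J\subseteq I$ in a formally equidimensional local ring, $I\subseteq\overline{J}$ is equivalent to the multiplicity equality $e(I)=e(J)$. Since one always has $e(I)\le e(J)$ under this containment, the task becomes producing the reverse inequality $e(I)\ge e(J)$ from the assumption $\pt^{J}_{+}(I)\le d$. The link is through the length estimate $\ell(R/J^{[q]})\le\ell(R/I^{\nu+1})$, where $\nu=\nu^{J}_{I}(q)$, combined with Lech's formula $\lim_{q\to\infty}\ell(R/J^{[q]})/q^{d}=e(J)$ on the left and the Hilbert--Samuel growth $\ell(R/I^{n})=e(I)n^{d}/d!+O(n^{d-1})$ on the right, together with Proposition~\ref{basic}(7) applied to the hypothesis $\pt^{J}_{+}(I)\le d$.

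The main obstacle is that a naive comparison yields only $\pt^{J}_{+}(I)\ge\bigl(d!\,e(J)/e(I)\bigr)^{1/d}$, and since $d!/d^{d}<1$ for $d\ge 2$ this does not by itself force $e(I)\ge e(J)$. Sharpening this to the expected $\pt^{J}_{+}(I)\ge d\bigl(e(J)/e(I)\bigr)^{1/d}$ is essentially the multiplicity conjecture of Section~5. To obtain the required bound without relying on that conjecture, I would combine the Hochster--Huneke Briançon--Skoda theorem $\overline{J^{m}}\subseteq(J^{m-d+1})^{*}$ with a refined analysis of how $J^{[q]}$ sits inside $\overline{J^{dq}}$, exploiting the Frobenius-compatible structure of $J^{[q]}$ (rather than merely the containment $J^{[q]}\subseteq J^{q}$) to upgrade the length comparison. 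Closing this argument is the key technical step.

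Part~(2) then follows immediately from Part~(1): assuming $J\subseteq I$, Example~\ref{maximal-threshold}(i) gives $\pt^{J}(J)=d$, so by Proposition~\ref{basic}(2) $\pt^{J}_{-}(I)\ge\pt^{J}_{-}(J)=d$. Hence $\pt^{J}_{+}(I)\le d$ forces $\pt^{J}_{-}(I)=\pt^{J}_{+}(I)=d$, which together with Part~(1) yields both the equivalence $I\subseteq\overline{J}\Leftrightarrow\pt^{J}_{+}(I)=d$ and the final refinement $\pt^{J}(I)=d$.
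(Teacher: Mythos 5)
Your decomposition matches the paper's, and Part~(2) is argued identically. Your ``only if'' direction of Part~(1) (the reduction identity $(I+J)^{n_0+k}=J^k(I+J)^{n_0}$ together with the pigeonhole bound $J^{d(q-1)+1}\subseteq J^{[q]}$) is correct and is a slightly more hands-on variant of the paper's one-line appeal to Proposition~\ref{basic}(2) and Example~\ref{maximal-threshold}(i).

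The gap, which you have correctly flagged yourself, is the ``if'' direction of Part~(1). Routing through Rees's theorem requires $e(I)\geq e(J)$, and the length/Lech comparison you describe only yields $\pt_-^J(I)\geq\bigl(d!\,e(J)/e(I)\bigr)^{1/d}$; extracting $e(I)\geq e(J)$ from $\pt_+^J(I)\leq d$ would require improving the constant $d!$ to $d^d$, which is precisely Conjecture~\ref{mult conj} and is open in this generality. The Brian\c con--Skoda refinement you sketch does not produce that improvement, so this route cannot be closed with the tools at hand. The paper's proof avoids multiplicities entirely: after reducing to $\widehat{R}_{\rm red}$ (using formal equidimensionality) and choosing a test element $c$, it applies Proposition~\ref{basic}(7) to get $I^{q(d+1/q_0)}\subseteq J^{[q]}$, then uses colon-capturing for the parameter ideal $J$ to obtain $I^q\subseteq J^{[q]}\colon J^{q(d-1+1/q_0)}\subseteq\bigl(J^{\,q-d+1-q/q_0}\bigr)^*$. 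Multiplying by $c$ and invoking the Artin--Rees lemma upgrades this to an honest containment $I^q\subseteq J^{\,q-d+1-q/q_0-l}$ for a fixed $l$; evaluating under an arbitrary discrete valuation $\nu$ centered at $\m$, dividing by $q$, and letting $q\to\infty$ and then $q_0\to\infty$ gives $\nu(I)\geq\nu(J)$, hence $I\subseteq\overline{J}$. The essential ingredient you are missing is colon-capturing for parameter ideals, not any multiplicity bound.
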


\begin{proof}
Note that if $J\subseteq I$, then
$\pt_{-}^J(I)\geq\pt_{-}^J(J)=\pt^J(J)=d$, by Example
\ref{maximal-threshold} (i). Hence both assertions in (2) follow
from the assertion in (1).

One implication in (1) is easy: if $I \subseteq \overline{J}$, then
by Proposition \ref{basic} (2) we have
$\pt_+^J(I)\leq\pt_+^J(\overline{J})= \pt^J(J)=d$. Conversely,
suppose that $\pt_+^J(I)\leq d$. In order to show that
$I\subseteq\overline{J}$, we may assume that $R$ is complete and
reduced. Indeed, first note that the inverse image of
 $\overline{J\widehat{R}_{\rm red}}$ in $R$ is contained in
 $\overline{J}$, hence it is enough to show that $I\widehat{R}_{\rm
 red}\subseteq\overline{J \widehat{R}_{\rm red}}$. Since
$J\widehat{R}_{\rm red}$ is again generated by a full system of
parameters, and since we trivially have
$$\pt^{J\widehat{R}_{\rm red}}(I\widehat{R}_{\rm
red})\leq \pt^J(I)\leq d,$$  we may replace $R$ by $\widehat{R}_{\rm
red}$.

 Since $R$ is complete and reduced, we can find a test element $c$
for $R$. By Proposition~\ref{basic} (7), the assumption
$\pt_+^J(I)\leq d$ implies that for all $q_0=p^{e_0}$ and for all
large $q=p^e$, we have
 $$I^{q(d+(1/q_0))} \subseteq J^{[q]}.$$
Hence $I^q J^{q(d-1+(1/q_0))} \subseteq J^{[q]}$, and thus
$$I^q \subseteq J^{[q]}\colon J^{q(d-1+(1/q_0))} \subseteq (J^{q-d+1-(q/q_0)})^*,$$
where the last containment follows from the colon-capturing property
of tight closure \cite[Theorem 7.15a]{HH1}.
We get $cI^q \subseteq cR \cap J^{q-d+1-(q/q_0)}
\subseteq cJ^{q-d+1-(q/q_0)-l}$ for some fixed integer $l$ that is
independent of $q$, by the Artin-Rees lemma. Since $c$ is a non-zero
divisor in $R$, it follows that
\begin{equation}\label{eq_integral}
I^q \subseteq J^{q-d+1-(q/q_0)-l}.
\end{equation}
 If $\nu$ is a discrete valuation
with center in $\m$, we may apply $\nu$ to (\ref{eq_integral}) to
deduce $q\nu(I) \ge \left(q-d+1-\frac{q}{q_0}-l\right) \nu(J)$.
Dividing by $q$ and letting $q$ go to infinity gives $\nu(I) \ge
\left(1-\frac{1}{q_0}\right)\nu(J)$. We now let $q_0$ go to infinity
to obtain $\nu(I) \ge \nu(J)$. Since this holds for every $\nu$, we
have $I\subseteq\overline{J}$.
\end{proof}

\begin{eg}  Let $(R,\m)$ be a regular local ring of characteristic $p>0$
with $\dim(R)=d$, and $J$ be an ideal of $R$ generated by a full
system of parameters. We define $a$ to be the  maximal integer $n$
such that $\m^n \not\subseteq J$. Then $\m^s\subseteq\overline{J}$
if and only if $s\ge \frac{a}{d}+1$ since
$\pt^J(\m^s)=\frac{a+d}{s}$ by Example~\ref{maximal-threshold} (iii)
and Proposition 2.2 (3).
\end{eg}

\begin{ques}
Does this statement hold in a more general setting ? Can we replace
\lq\lq regular" by \lq\lq Cohen-Macaulay" ?
\end{ques}

\section{F-thresholds of modules}

In the section we give a generalization of the notion of
F-thresholds, in which we replace the auxiliary ideal in the
definition by a submodule of a given module. We have seen in
Proposition~\ref{regular} that in a regular F-finite ring, the
F-thresholds of an ideal $\a$ coincide with the F-jumping exponents
of $\a$. This might fail in non-regular rings, and in fact, it is
often the case that $\mathrm{fpt}(\a)<\pt^J(\a)$ for every ideal
$J$. However, as Corollary~\ref{cor-F-thr} below shows, we can
remedy this situation if we consider the following more general
notion of F-thresholds.

Suppose now that $\a$ is a fixed ideal in a Noetherian ring $R$ of
characteristic $p>0$. Let $M$ be an $R$-module, and $N\subseteq M$ a
submodule such that $\a^n N=0$ for some $n>0$. We define
\begin{enumerate}
\item For $q=p^e$, let  $\nu_{M,\a}^{N}(q)=
 \max\{ r\in\N \mid  \a^r N_M^{[q]}\neq 0\}$
(we put $\nu_{M,\a}^N(q)=0$ if $\a N_M^{[q]}=0$).
\item $\pt_{M,+}^N(\a)=\limsup_{q\to\infty}\frac{\nu_{M,\a}^N(q)}{q}$ and
$\pt_{M,-}^N(\a)=\liminf_{q \to \infty}\frac{\nu_{M,\a}^N(q)}{q}.$
When $\pt_{M,+}^N(\a)=\pt_{M,-}^N(\a)$, we call this limit the
\emph{F-threshold} of  $\a$  with respect to $(N,M)$, and we denote
it by $\pt^N_M(\a)$.
\end{enumerate}

\begin{rem} If $J$ is an ideal of $R$ with $\a\subseteq \sqrt{J}$, then
it is clear that $\nu_{\a,A/J}^{A/J}(q) = \nu_{\a}^J(q)$, hence
$\pt_{A/J,\pm}^{A/J}(\a)=\pt_{\pm}^{J}(\a)$.  Thus the notion of
F-threshold with respect to modules extends our previous definition
of F-thresholds with respect to ideals.
\end{rem}

\begin{lem}\label{elem-mod}
 Let $R$, $\a$, $M$ and $N$ be as in the above definition.
\begin{enumerate}
\item If $\b\subseteq \a$ is an ideal, then $\pt_{M,\pm}^N(\b)\le \pt_{M,\pm}^N(\a)$.
\item If $N'\subseteq N$, then $\pt_{M,\pm}^{N'}(\a)\le \pt_{M,\pm}^N(\a)$.
\item If $\phi\colon M\to M'$ is a homomorphism of $R$-modules, and if
$N'=\phi(N)$, then $\pt_{M',\pm}^{N'}(\a)\le \pt_{M,\pm}^N(\a)$. If
$R$ is regular and $\phi$ is injective, then $\pt_{M',\pm}^{N'}(\a)=
\pt_{M,\pm}^N(\a)$.
\item If $R$ is F-pure, then $\dfrac{\nu_{M,\a}^N(q)}{q}\le
\dfrac{\nu_{M,\a}^N(qq')}{qq'}$ for every $q,q'$. Hence in this case
the limit  $\pt_{M}^N(\a)$ exists and it is equal to
$\sup_q\dfrac{\nu_{M,\a}^N(q)}{q}$.
\end{enumerate}
\end{lem}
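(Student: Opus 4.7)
The plan is to prove the four parts in order, each reducing to a short functoriality argument about the Frobenius pushforward $\F^e$ or to a direct containment.

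For (1), the inclusion $\b^r\subseteq\a^r$ immediately gives $\b^r N_M^{[q]}\subseteq\a^r N_M^{[q]}$ for all $q,r$, so $\nu_{M,\b}^N(q)\le\nu_{M,\a}^N(q)$; dividing by $q$ and taking $\limsup$ or $\liminf$ gives both inequalities. For (2), the inclusions $N'\hookrightarrow N\hookrightarrow M$ induce a factorization $\F^e(N')\to\F^e(N)\to\F^e(M)$, so $(N')_M^{[q]}\subseteq N_M^{[q]}$, and then $\a^r N_M^{[q]}=0$ forces $\a^r(N')_M^{[q]}=0$. For (3), applying $\F^e$ to $\phi$ produces $\F^e(\phi)\colon\F^e(M)\to\F^e(M')$; since $N\twoheadrightarrow N'=\phi(N)$ and $\F^e$ is right exact, $\F^e(\phi)$ carries $N_M^{[q]}$ onto $(N')_{M'}^{[q]}$, so $\a^r N_M^{[q]}=0$ forces $\a^r(N')_{M'}^{[q]}=0$. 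When $R$ is regular, Kunz's theorem makes Frobenius flat and hence $\F^e$ exact, so $\F^e(\phi)$ is injective whenever $\phi$ is; in that case pulling back the vanishing reverses the implication and yields equality.

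The key input for (4) is that F-purity of $R$ makes the iterated Frobenius map $F^{e'}_P\colon P\to\F^{e'}(P)$ injective for every $R$-module $P$. I would apply this with $P=\F^e(M)$: if $a\in\a^r$ and $y\in N_M^{[q]}$ satisfy $ay\neq 0$, then
$$F^{e'}_{\F^e(M)}(ay)=a^{q'}\,F^{e'}_{\F^e(M)}(y)\neq 0$$
in $\F^{e+e'}(M)\cong\F^{e'}(\F^e(M))$. Unwinding the canonical identification shows that $F^{e'}_{\F^e(M)}$ sends $n^q$ to $n^{qq'}$ for every $n\in N$, and hence maps $N_M^{[q]}$ into $N_M^{[qq']}$; moreover $a^{q'}\in\a^{rq'}$, because any element of $\a^r$ is a sum of $r$-fold products from $\a$ and its $q'$-th power then expands as a sum of $q'$-fold products in $\a^r$, i.e., an element of $(\a^r)^{q'}=\a^{rq'}$. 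Therefore $\a^{rq'}N_M^{[qq']}\neq 0$, which is to say $\nu_{M,\a}^N(qq')\geq q'\,\nu_{M,\a}^N(q)$. Dividing both sides by $qq'$ gives the stated monotonicity, whence the limit exists and equals $\sup_q\nu_{M,\a}^N(q)/q$.

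The only step that requires a bit of care is the naturality check in (4) that $F^{e'}_{\F^e(M)}$ carries $N_M^{[q]}$ into $N_M^{[qq']}$ under the canonical identification $\F^{e'}(\F^e(M))\cong\F^{e+e'}(M)$; once this bookkeeping is in place, everything else is a routine diagram chase with the Frobenius functor.
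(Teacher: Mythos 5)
Your proof is correct and takes essentially the same approach as the paper: (1)–(2) from definitions, (3) via right-exactness of $\F^e$ and Kunz's flatness of Frobenius for the regular case, and (4) via the $p^{e'}$-semilinear injection $\F^e(M)\hookrightarrow\F^{e+e'}(M)$ provided by F-purity. The slight difference in (4)—choosing a specific nonzero product $ay$ rather than tracking the whole submodule $\a^r N_M^{[q]}$ as the paper does—is only cosmetic.
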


\begin{proof} The assertions in (1) and (2) follow from definition.
For (3), note that $\phi$ induces a surjection $N^{[q]}\to
{N'}^{[q]}$, which gives the first statement. Moreover, if $R$ is
regular and $\phi$ is injective, then the flatness of the Frobenius
morphism implies $N^{[q]}\simeq {N'}^{[q]}$, and we have equality.

Suppose now that $R$ is F-pure, hence $M\otimes_R {}^e \! R$ is a
submodule of $M \otimes_R {}^{ee'} \! R$. If $q=p^e$ and
$q'=p^{e'}$, and if $\a^r N^{[q]}\ne 0$, then $\a^{q'r}N^{[qq']}
\supseteq (\a^{r})^{[q']}N^{[qq']}\ne 0$. Therefore
$\nu^N_{M,\a}(qq')\geq q'\cdot\nu^N_{M,\a}(q)$.
\end{proof}

Our next proposition gives an analogue of Proposition~\ref{regular}
in the non-regular case.

\begin{prop}\label{F-thr-mod2}
Let $\a$ be a proper nonzero ideal in a local normal $\Q$-Gorenstein
ring $(R,\m)$. Suppose that $R$ is F-finite and F-pure, and that the
test ideal $\tau(R)$ is $\m$-primary. We denote by $E$ the injective
hull of $R/\m$.
\begin{enumerate}
\item If $N$ is a submodule of $E$ such that $\a\subseteq\sqrt{{\rm Ann}_R(N)}$,
and if $\alpha=\pt^N_E(\a)$, then $N \subseteq
(0)_E^{*\a^{\alpha}}$.
\item If $\alpha$ is a non-negative real number, and if we put $N=
(0)_E^{*\a^{\alpha}}$, then $\pt^N_E(\a)\le \alpha$.
\item There is an order-reversing bijection between
the F-thresholds of $\a$ with respect to the submodules of $E$ and
the ideals of the form $\tau(\a^{\alpha})$.
\end{enumerate}
\end{prop}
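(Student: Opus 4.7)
The plan is to handle the three parts in order, with part (1) being an almost immediate consequence of F-purity, part (2) being the technical core that relies on a uniform test element plus Matlis duality, and part (3) a formal packaging of (1) and (2).

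For part (1), the key point is that F-purity, via Lemma \ref{elem-mod}(4), upgrades $\alpha = \pt^N_E(\a)$ from a limit to a supremum, so that $\nu^N_{E,\a}(q) \le \alpha q$ for \emph{every} $q=p^e$. Since $R$ is a normal local ring (hence a domain) and $\a \ne 0$, I would then pick any nonzero $c \in \a$. For $z \in N$ and any $q=p^e$, the inclusion $c \in \a$ gives
\[
c z^q \a^{\lceil \alpha q \rceil} \subseteq \a^{\lceil \alpha q \rceil + 1} N^{[q]}_E,
\]
and the inequality $\lceil \alpha q \rceil + 1 > \alpha q \ge \nu^N_{E,\a}(q)$ forces $\lceil \alpha q \rceil + 1 \ge \nu^N_{E,\a}(q)+1$ (both integers), so the right-hand side vanishes. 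By the definition of $\a^{\alpha}$-tight closure, this is exactly $z \in (0)^{*\a^{\alpha}}_E$, proving (1).

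For part (2), set $N=(0)^{*\a^{\alpha}}_E$. Because $R$ is F-finite and reduced, there exists a test element $c \in R^\circ$ for the $\a^\alpha$-tight closure that works uniformly on all of $N$, so $c\, \a^{\lceil \alpha q \rceil} N^{[q]}_E = 0$ for all $q \gg 0$, and hence
\[
\a^{\lceil \alpha q \rceil} N^{[q]}_E \subseteq (0 :_E c).
\]
The delicate step — and the principal obstacle — is to convert this $c$-annihilation into a $\a$-annihilation of the form $\a^{\lceil \alpha q \rceil + m} N^{[q]}_E = 0$ for some $m$ independent of $q$. Under the hypotheses (normal, $\Q$-Gorenstein, F-finite, F-pure, $\tau(R)$ $\m$-primary), I would exploit Matlis duality and the identification $\Ann_R(N) = \tau(\a^\alpha)$ to describe $N^{[q]}_E$ via the Frobenius-compatible structure on $E$ (the Cartier operator coming from the $\Q$-Gorenstein assumption). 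Concretely, since $\tau(R)$ is $\m$-primary, one can choose $c$ inside $\a$, and then by iterating the $c$-annihilation (or, equivalently, tracing the Matlis dual of the chain $\tau(\a^\alpha) \supseteq \tau(\a^\alpha)^{[q]}$) one obtains a fixed bound $m$ with $\a^{\lceil \alpha q \rceil + m}N^{[q]}_E=0$. This gives $\nu^N_{E,\a}(q) \le \alpha q + m$, and invoking Lemma \ref{elem-mod}(4) a second time to pass from the $\limsup$ to the supremum yields $\pt^N_E(\a) \le \alpha + m/q$ for all $q$, hence $\pt^N_E(\a)\le \alpha$.

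Part (3) then follows formally. By (1), for any submodule $N\subseteq E$ with $\a\subseteq \sqrt{\Ann_R N}$ and $\alpha = \pt^N_E(\a)$, one has $N \subseteq (0)^{*\a^\alpha}_E$; conversely (2) ensures that $\pt^{(0)^{*\a^\beta}_E}_E(\a) \le \beta$, and combining with (1) gives equality when $\beta$ is an F-jumping exponent. Thus the submodules realizing F-thresholds are precisely the $(0)^{*\a^\alpha}_E$, and Matlis duality (after passing to the completion, which does not affect $\nu$ by the local-Noetherian compatibility of Frobenius powers) identifies the assignment $N \mapsto \Ann_R(N)$ as an inclusion-reversing bijection; restricted to $(0)^{*\a^\alpha}_E$ it lands on the ideals $\tau(\a^\alpha)$, giving the required order-reversing bijection. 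The hard part, as indicated, is the uniform annihilation estimate in (2): the $\Q$-Gorenstein and $\m$-primary test ideal hypotheses are what make the Frobenius action on $E$ controllable enough to extract such a bound from bare $c$-annihilation.
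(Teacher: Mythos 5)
Your part (1) is essentially the paper's argument: F-purity via Lemma \ref{elem-mod}(4) converts $\pt^N_E(\a)$ into a supremum so that $\nu^N_{E,\a}(q)\le \alpha q$ for \emph{all} $q$, hence $\a^{\lceil \alpha q\rceil+1}N^{[q]}_E=0$, and then any nonzero $c\in\a$ witnesses $N\subseteq(0)^{*\a^\alpha}_E$. Part (3) is likewise the same formal packaging.

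Part (2) has a genuine gap. You correctly identify that what is needed is a uniform estimate $\a^{\lceil\alpha q\rceil+m}N^{[q]}_E=0$ with $m$ independent of $q$, and you correctly flag that $\tau(R)$ being $\m$-primary must be the lever. But your proposed mechanism — ``choose $c\in\a$ and iterate the $c$-annihilation,'' or trace some chain $\tau(\a^\alpha)\supseteq\tau(\a^\alpha)^{[q]}$ — does not actually produce such a bound. Annihilation by a single test element $c$, even one lying in $\a$, only gives $c\,\a^{\lceil\alpha q\rceil}N^{[q]}_E=0$; iterating this yields $c^k\,\a^{\lceil\alpha q\rceil}N^{[q]}_E=0$, which still involves only the one element $c$ and never upgrades to annihilation by a power of the full ideal $\a$. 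The paper's proof rests on a specific fact you are missing: by [HT, Cor.~2.4], \emph{every} element of $\tau(R)$ is an $\a^\alpha$-test element. That is what lets one annihilate by the entire ideal $\tau(R)$ at once; combined with $\a^m\subseteq\tau(R)$ (from $\m$-primariness), this immediately gives $\a^{m+\lceil\alpha q\rceil}N^{[q]}_E\subseteq\tau(R)\,\a^{\lceil\alpha q\rceil}N^{[q]}_E=0$, hence $\nu^N_{E,\a}(q)<m+\alpha q$ and, after dividing by $q$, $\pt^N_E(\a)\le\alpha$. Without invoking that every element of $\tau(R)$ is a uniform $\a^\alpha$-test element (uniformly over all $\alpha$), the passage from a single-element annihilator to an ideal-power annihilator does not go through.
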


\begin{proof}
For (1), note that since $R$ is F-pure, we have
$\nu^N_E(q)\leq\alpha q$ for every $q=p^e$. This implies
$$\a^{\lceil \alpha q\rceil+1}N_E^{[q]}=0,$$
hence for every nonzero $d\in\a$ we have $d\a^{\lceil\alpha
q\rceil}N_E^{[q]}=0$ for all $q$. By definition, $N \subseteq
(0)_E^{*\a^{\alpha}}$.

Suppose now that $\alpha\geq 0$, and that $N=(0)_E^{*\a^{\alpha}}$.
By hypothesis, we can find $m$ such that $\a^m\subseteq\tau(R)$. It
follows from \cite[Cor. 2.4]{HT} that every element in $\tau(R)$ is
an $\a^{\alpha}$-test element. Therefore $\a^{m+\lceil \alpha
q\rceil} N_E^{[q]}=0$, hence $\nu^N_{E,\a}(q)<m+\alpha q$ for all
$q\gg 0$. Dividing by $q$ and taking the limit as $q$ goes to
infinity, gives $\pt^N_E(\a)\leq\alpha$.

We assume that $R$ is F-finite, normal and $\Q$-Gorenstein, hence
for every non-negative $t$ we have $\tau(\a^t)={\rm
Ann}_R(0_E^{*\a^t})$. Note also that by \cite[Prop. 3.2]{HT}, taking
the generalized test ideal commutes with completion. This shows that
the set of ideals of the form $\tau(\a^{\alpha})$ is in bijection
with the set of submodules of $E$ of the form
$(0)_E^{*\a^{\alpha}}$. Hence in order to prove (3) it is enough to
show that the map
$$\{(0)_E^{*\a^{\alpha}}\mid\alpha\geq 0\}\to
\{\pt^N_E(\a)\mid N\subseteq E, \a\subseteq \sqrt{{\rm
Ann}_R(N)}\}$$ that takes $N$ to $\pt^N_E(\a)$ is bijective, the
inverse map taking $\alpha$ to $(0)_E^{*\a^{\alpha}}$.

Suppose first that $N=(0)_E^{*\a^{\alpha}}$, and let
$\beta=\pt^N_E(\a)$. It follows from (2) that $\beta\leq\alpha$,
hence $(0)_E^{*\a^{\beta}}\subseteq N$. On the other hand, (1) gives
$N\subseteq (0)_E^{*\a^{\beta}}$, hence we have equality.

Let us now start with $\alpha=\pt^N_E(\a)$, and let $N'=
(0)_E^{*\a^{\alpha}}$. We deduce from (1) that $N\subseteq N'$,
hence $\pt^{N'}_E(\a)\geq \alpha$. Since (2) implies
$\pt^{N'}_E(\a)\leq\alpha$, we get $\alpha=\pt^{N'}_E(\a)$, which
completes the proof of (3).
\end{proof}

\begin{cor}\label{cor-F-thr}
Let $\a$ be a proper nonzero ideal in a local normal $\Q$-Gorenstein
ring $(R,\m)$. If $R$ is F-finite and F-regular, then for every
ideal $J$ in $R$ we have
$$\xi^J(\a)=\pt_E^N(\a),$$
where $E$ is the injective hull of $R/\m$ and $N={\rm Ann}_E(J)$. In
particular, the F-pure threshold $\mathrm{fpt}(\a)$ is equal to
$\pt^Z_E(\a)$, where $Z=(0\,\colon_E\,\m)$ is the socle of $E$.
\end{cor}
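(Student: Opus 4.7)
The strategy is to reduce the statement to Proposition~\ref{F-thr-mod2} via Matlis duality. Since $R$ is local, F-finite, and F-regular, all hypotheses of Proposition~\ref{F-thr-mod2} are satisfied: F-regularity implies F-purity, and $\tau(R)=R$ is trivially $\m$-primary. Moreover, since $\a\subseteq\sqrt{J}$ (implicit in the definition of $\xi^J(\a)$), we have $\a\subseteq\sqrt{\Ann_R(N)}$, so the proposition also applies to the submodule $N\subseteq E$. The plan is to translate the defining condition of $\xi^J(\a)$ into a condition on $N$ and the modules $(0)_E^{*\a^c}$, and then invoke the proposition.

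First I would prove that for every $c\ge 0$,
\[
\tau(\a^c)\subseteq J \;\Longleftrightarrow\; N\subseteq (0)_E^{*\a^c}.
\]
The hypothesis that $R$ is F-finite, normal, and $\Q$-Gorenstein gives $\tau(\a^c)=\Ann_R\bigl((0)_E^{*\a^c}\bigr)$, as recalled in the proof of Proposition~\ref{F-thr-mod2}. Combined with the Matlis double-annihilator identity $\Ann_E(\Ann_R(K))=K$ applied to the Artinian submodule $K=(0)_E^{*\a^c}\subseteq E$, this yields $\Ann_E(\tau(\a^c))=(0)_E^{*\a^c}$. The order-reversing Matlis correspondence between ideals of $R$ and Artinian submodules of $E$ then produces the equivalence above, and hence
\[
\xi^J(\a)=\sup\{c\ge 0\mid \tau(\a^c)\not\subseteq J\}=\sup\{c\ge 0\mid N\not\subseteq (0)_E^{*\a^c}\}.
\]

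It remains to identify this supremum with $\alpha:=\pt_E^N(\a)$. The family $\{(0)_E^{*\a^c}\}_{c\ge 0}$ is weakly increasing in $c$ (directly from the definition of $\a^c$-tight closure), and Proposition~\ref{F-thr-mod2}(1) gives $N\subseteq (0)_E^{*\a^\alpha}$; hence $N\subseteq (0)_E^{*\a^c}$ for every $c\ge\alpha$, so the supremum is $\le\alpha$. Conversely, if some $c<\alpha$ satisfied $N\subseteq (0)_E^{*\a^c}$, then by Lemma~\ref{elem-mod}(2) and Proposition~\ref{F-thr-mod2}(2) applied to $(0)_E^{*\a^c}$,
\[
\alpha=\pt_E^N(\a)\le \pt_E^{(0)_E^{*\a^c}}(\a)\le c,
\]
a contradiction; thus every $c<\alpha$ lies in the supremum set, giving the reverse inequality. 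Combining, $\xi^J(\a)=\pt_E^N(\a)$. The last assertion is immediate: taking $J=\m$ yields $N=\Ann_E(\m)=Z$, and $\mathrm{fpt}(\a)=\xi^\m(\a)$ by definition.

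The only non-formal ingredient is the Matlis-duality identity $\Ann_E(\tau(\a^c))=(0)_E^{*\a^c}$, i.e.\ the reflexivity of the submodule $(0)_E^{*\a^c}\subseteq E$. This is standard because $E$ is Artinian over any local Noetherian ring and every submodule of an Artinian module is Matlis-reflexive; indeed, it is already used implicitly in the proof of Proposition~\ref{F-thr-mod2}(3) to set up the bijection between submodules of the form $(0)_E^{*\a^\alpha}$ and the ideals $\tau(\a^\alpha)$. The content of the present corollary is that this bijection extends, via the supremum characterization of the F-jumping numbers, to a formula for \emph{arbitrary} ideals $J$.
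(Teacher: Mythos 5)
Your proposal is correct and follows essentially the same route as the paper: both rest on the Matlis-duality equivalence $\tau(\a^c)\subseteq J \iff N\subseteq (0)_E^{*\a^c}$, then combine Proposition~\ref{F-thr-mod2}(1) and (2) with the monotonicity from Lemma~\ref{elem-mod}(2) to pin down $\pt_E^N(\a)$ as the F-jumping exponent. You phrase the final step via the supremum characterization of $\xi^J(\a)$ rather than the paper's equivalent ``if and only if'' chain, but the argument is the same.
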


\begin{proof}
Let $\beta:=\pt^N_E(\a)$. Given $\alpha\geq 0$, Matlis duality
implies that $\tau(\a^{\alpha})\subseteq J$ if and only if
$N\subseteq (0)_E^{*\a^{\alpha}}$. If this holds, then part (2) in
the proposition gives
$$\alpha\geq
\pt_E^{(0)_E^{*\a^{\alpha}}}(\a)\geq \pt_E^N(\a)=\beta.$$
Conversely, if $\alpha\geq\beta$, then
$$(0)_E^{*\a^{\alpha}}\supseteq (0)_E^{*\a^{\beta}}\supseteq
N,$$ by part (1) in the proposition. This shows that
$\pt^N_E(\a)=\xi^J(\a)$, and the last assertion in the corollary
follows by taking $J=\m$.
\end{proof}

\begin{rem}
Let $\a$ be an ideal in the local ring $(R,\m)$. We have seen that
$\pt^I(\a)\ge \pt^{\m}(\a)$ for every proper ideal $I$. Note also
that applying Prop~\ref{elem-mod} (3) to the embedding $R/m\simeq
Z\hookrightarrow E=E_R(R/\m)$, we get
$\pt^{\m}(\a)=\pt^{R/\m}_{R/\m}(\a)\ge
\pt^{Z}_E(\a)=\mathrm{fpt}(\a)$. Thus we always have
$\mathrm{fpt}(\a)\le \pt^I(\a)$, and equality is possible only if
$\mathrm{fpt}(\a)= \pt^{\m}(\a)$. While this equality holds in some
non-regular examples (see Remark~\ref{remark_equality}), this seems
to happen rather rarely.
\end{rem}

\section{Connections between F-thresholds and multiplicity}

Given an $\m$-primary ideal $\a$ in a regular local ring $(R,\m)$,
essentially of finite type over a field of characteristic zero,  de
Fernex, Ein and the second author proved in \cite{dFEM} an
inequality involving the log canonical threshold $\mathrm{lct}(\a)$
and the multiplicity $e(\a)$. Later, the third and fourth authors
gave in \cite{TW} a characteristic $p$ analogue of this result,
replacing the log canonical threshold $\mathrm{lct}(\a)$ by the
F-pure threshold $\mathrm{fpt}(\a)$. We propose the following
conjecture, generalizing this inequality.

\begin{conj}\label{mult conj}
Let $(R,\m)$ be a $d$-dimensional Noetherian local ring of
characteristic $p>0$. If $J \subseteq \m$ is an ideal generated by a
full system of parameters, and if $\a \subseteq\m$ is an
$\m$-primary ideal, then
$$e(\a) \ge \left(\frac{d}{\pt_{-}^J(\a)}\right)^d e(J).$$
\end{conj}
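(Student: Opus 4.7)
The plan is to establish the conjecture in the two special cases singled out in the introduction and to identify the main obstacle in general.

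\textbf{Regular monomial case.} When $R$ is regular local and $J=(x_1^{a_1},\ldots,x_d^{a_d})$ for a regular system of parameters $x_1,\ldots,x_d$, I would follow the strategy of \cite{dFEM} and \cite{TW}. The first step is to reduce to the case where $\a$ is itself a monomial ideal by passing to an initial ideal with respect to a suitable monomial order: this operation preserves $e(\a)$, and the non-containments $\a^r\not\subseteq J^{[q]}$ specialize to the initial ideal (since $J$ is already monomial), so $\pt_{-}^J(\a)$ does not decrease. Once $\a$ is monomial, Example~\ref{maximal-threshold}(ii) identifies $a=(a_1,\ldots,a_d)\in\partial(\pt^J(\a)\cdot P(\a))$, while $e(\a)=d!\,\mathrm{vol}(\R_{\ge 0}^d\setminus P(\a))$ and $e(J)=a_1\cdots a_d$. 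The conjecture then reduces to the convex-geometric estimate: if a closed convex $P\subseteq \R_{\ge 0}^d$ of finite co-volume has $a/c\in\partial P$, then
$$d!\,\mathrm{vol}(\R_{\ge 0}^d\setminus P)\;\ge\; (d/c)^d\prod_i a_i,$$
which is proved in \cite{dFEM} by a slicing/Fubini argument comparing $\R_{\ge 0}^d\setminus P$ with the simplex cut off by a supporting hyperplane at $a/c$.

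\textbf{Homogeneous case.} For $\a$ and $J$ generated by homogeneous systems of parameters in a graded Cohen--Macaulay $k$-algebra (of arbitrary characteristic), the plan is to prove the stronger characteristic-free statement advertised in the introduction: if $\a^N\subseteq J$ then $e(\a)\ge(d/(d+N-1))^d e(J)$. To see that this implies the conjecture, note that by definition there is a subsequence $q=p^e\to\infty$ along which $\nu_\a^J(q)/q\to \pt_{-}^J(\a)=:c$, so $\a^{N_q+1}\subseteq J^{[q]}$ with $N_q/q\to c$; since $J^{[q]}$ is again generated by a homogeneous system of parameters (with $e(J^{[q]})=q^d e(J)$ by the Koszul computation of multiplicity), applying the stronger statement to $(\a,J^{[q]})$ gives
$$e(\a)\;\ge\;\left(\frac{d\,q}{d+N_q}\right)^d e(J)\;\longrightarrow\;\left(\frac{d}{c}\right)^d e(J).$$
To prove the stronger statement itself, I would write $\a=(f_1,\ldots,f_d)$ and $J=(g_1,\ldots,g_d)$ with $\deg f_i=\alpha_i$, $\deg g_j=\beta_j$. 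Cohen--Macaulayness and the Koszul resolution give $e(\a)=e(R)\prod_i\alpha_i$ and $e(J)=e(R)\prod_j\beta_j$, so $e(\a)/e(J)=\prod\alpha_i/\prod\beta_j$. The containment $\a^N\subseteq J$ forces a linear inequality among the degrees (comparing $\sum\alpha_i$ with $\sum\beta_j$ and $N$), extracted from a Hilbert-series/socle-degree analysis of $R/\a$, $R/J$, and the quotient $R/\a^N\twoheadrightarrow R/J$. The AM--GM inequality then converts this linear constraint into the multiplicative bound $\prod\alpha_i/\prod\beta_j\ge(d/(d+N-1))^d$.

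\textbf{Main obstacle.} The hardest step in attacking the conjecture in general is converting the asymptotic containment data encoded by $\pt_{-}^J(\a)$ into a sharp multiplicative comparison of multiplicities when neither a Newton polyhedron nor explicit generator degrees are available. The natural way forward, following the homogeneous argument, is to replace the graded degree bookkeeping by an asymptotic Hilbert--Samuel analysis that recovers a substitute for the linear degree inequality $\sum\alpha_i\le\sum\beta_j+N-1$ directly from the growth of $\nu_\a^J(p^e)$, and then apply AM--GM. Making this asymptotic passage work without the grading is the principal difficulty.
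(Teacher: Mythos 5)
Your proposal correctly identifies the two special cases the paper actually proves (the regular/monomial case and the graded Cohen--Macaulay case) and the overall strategy in each, but there are two concrete problems.

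First, in the monomial reduction step, the direction of your specialization claim is backwards. Since $\mathrm{in}_\lambda(\a)^r\subseteq\mathrm{in}_\lambda(\a^r)$ and $J^{[q]}$ is a monomial ideal, it is \emph{containments} $\a^r\subseteq J^{[q]}$ that pass to the initial ideal, not non-containments: if $f\in\a^r\smallsetminus J^{[q]}$ it can easily happen that $\mathrm{in}_\lambda(f)\in J^{[q]}$ (e.g.\ $f=x^2+y$, $J=(x^2,y^2)$). What you actually get is $\nu^J_{\mathrm{in}_\lambda(\a)}(q)\le\nu^J_\a(q)$, hence $\pt^J(\mathrm{in}_\lambda(\a))\le\pt^J(\a)$. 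Fortunately this is exactly the inequality needed for the reduction to go through (the paper derives it instead from the containment $\tau(\mathrm{in}_\lambda(\a)^t)\subseteq\mathrm{in}_\lambda(\tau(\a^t))$ of de Fernex, but your elementary route also works once the direction is fixed). You should also be careful with the identity $e(\a)=d!\,\mathrm{Vol}(\R_{\ge 0}^d\smallsetminus P(\a))$: the proof actually works with the one-sided length bound $\ell(R/\a)\ge\mathrm{Vol}(\R_{\ge 0}^d\smallsetminus P(\a))$ and first reduces the multiplicity statement to a length statement by replacing $\a$ with $\a^n$ and letting $n\to\infty$ (using $\pt^J(\a^n)=\pt^J(\a)/n$).

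Second, and more seriously, the homogeneous case is much harder than your sketch suggests, and the sketch as written would not produce a proof. There is no single linear inequality $\sum\alpha_i\le\sum\beta_j+N-1$ that AM--GM turns into the bound. The inequality one ultimately needs is $N+n-1\ge\sum_i d_i/a_i$ (with $a_i$ the degrees of the generators of $\a$, $d_i$ those of $J$, both sorted increasingly), and to prove it one introduces auxiliary integers $t_1,\dots,t_{n-1}$ (the minimal exponents with $x_1^{t_1-1}\cdots x_{i-1}^{t_{i-1}-1}x_i^{t_i}\in J$) and establishes a \emph{family} of inequalities: $t_1a_1+\cdots+t_ia_i\ge d_1+\cdots+d_i$ for each $i\le n-1$, via a comparison map between a Taylor-type resolution of $R/I_i$ and the Koszul resolution of $R/J$ combined with a $\Tor$-vanishing argument, plus a final inequality involving $N$ obtained from a socle-generator (determinant) computation. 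These inequalities are then assembled by a nontrivial weighted rearrangement lemma before AM--GM is applied at the very end. A naive Hilbert-series or socle-degree comparison of $R/\a^N\twoheadrightarrow R/J$ does not produce these partial-sum constraints, so your plan for this case is missing the main idea.
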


\begin{rem}\label{rmkmult}
(1) When $R$ is regular and $J=\m$, the above conjecture is
precisely the above-mentioned inequality, see \cite[Proposition
4.5]{TW}.

(2) When $R$ is a $d$-dimensional regular local ring, essentially of
finite type over a field of characteristic zero, we can consider an
analogous problem:  let $\a, J$ be $\m$-primary ideals in $R$ such
that $J$ is generated by a full system of parameters. Does the
following inequality hold
$$e(\a) \ge \left(\frac{d}{\lambda^J(\a)}\right)^d e(J),$$
where $\lambda^J(\a):=\max\{c>0\mid\J(\a^{c})\not\subseteq J\}$.
This would generalize the inequality in \cite{dFEM}, which is the
special case $J=\m$. However, this version is also open in general.

(3) The condition in Conjecture~\ref{mult conj} that $J$ is
generated by a system of parameters is crucial, as otherwise there
are plenty of counterexamples. Suppose, for example, that $(R,\m)$
is a regular local ring of dimension $d \ge 2$ and of characteristic
$p>0$.  Let $\a=\m^k$ and $J=\m^{\ell}$ with $k\geq 1$, $\ell\geq 2$
integers. It follows from Example~\ref{maximal-threshold} (3) that
$\pt^J(\a)=(d+\ell-1)/k$. Moreover, we have $e(\a)=k^d$ and
$e(J)=\ell^d$, thus $$e(\a)=k^d < (dk\ell/(d+\ell-1))^d=
\left(\frac{d}{\pt^J(\a)}\right)^d e(J).$$

\end{rem}

\begin{eg}
Let $R=k\llbracket X,Y,Z\rrbracket/(X^2+Y^3+Z^5)$ be a rational
double point of type $E_8$, with $k$ a field of characteristic
$p>0$. Let $\a=(x,z)$ and $J=(y,z)$. Then $e(\a)=3$ and $e(J)=2$. It
is easy to check that $\pt^J(\a)=5/3$ and $\pt^{\a}(J)=5/2$. Thus,
\begin{align*}
&e(\a)=3 > \frac{72}{25}=\left(\frac{2}{\pt^J(\a)}\right)^2 e(J),\\
&e(J)=2>\frac{48}{25}=\left(\frac{2}{\pt^{\a}(J)}\right)^2 e(\a).
\end{align*}
See Corollary~\ref{cor_homogeneous} below for a general statement in
the homogeneous case.
\end{eg}

We now show that Conjecture \ref{mult conj} implies an effective
estimate of the multiplicity of complete intersection F-rational
rings.

\begin{prop}
Let $(R,\m)$ be a $d$-dimensional F-rational local ring of
characteristic $p>0$ with infinite residue field $($resp. a rational
singularity over a field of characteristic zero$)$ which is a
complete intersection. If Conjecture $\ref{mult conj}$ $($resp.
Remark $\ref{rmkmult}$ $(1))$ holds true for the regular case, then
$e(R) \le 2^{d-1}$.
\end{prop}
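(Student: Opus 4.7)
The plan is to apply the regular-case conjecture to a Cohen presentation of $R$, interpret the resulting F-threshold via Example~\ref{maximal-threshold}(iii), and then reduce the multiplicity bound to the arithmetic of the orders of the defining equations. After completing $R$, write $R=S/(f_1,\ldots,f_c)$ with $(S,\n)$ regular local of dimension $n=d+c$ and $f_1,\ldots,f_c$ a regular sequence; by absorbing any order-one generators into a new regular system of parameters for $S$, arrange $\mu_j:=\ord(f_j)\ge 2$ for all $j$. Since $R$ is Cohen-Macaulay with infinite residue field, pick a sufficiently general minimal reduction $J=(x_1,\ldots,x_d)$ of $\m$ and lift to $\tilde x_i\in\n\setminus\n^2$. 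Setting $J':=(\tilde x_1,\ldots,\tilde x_d,f_1,\ldots,f_c)\subseteq S$, the ideal $J'$ is a parameter ideal of $S$ with $S/J'\cong R/J$, hence $e_S(J')=\ell(R/J)=e(R)$.

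Next, apply Conjecture~\ref{mult conj} in $S$ to $(\a,J)=(\n,J')$; since $e_S(\n)=1$, this yields
\[
e(R)\le\left(\frac{\pt^{J'}_{-}(\n)}{n}\right)^{n}.
\]
By Example~\ref{maximal-threshold}(iii) applied in $S$, $\pt^{J'}(\n)=n+a$ where $a=\max\{r:\n^r\not\subseteq J'\}$; the isomorphism $S/J'\cong R/J$ identifies $a$ with $\max\{r:\m^r\not\subseteq J\}$. F-rationality enters through the Brian\c{c}on--Skoda theorem for F-rational rings, which gives $\overline{J^d}\subseteq J$, together with $\m\subseteq\overline{J}$ (since $J$ is a reduction of $\m$): then $\m^d\subseteq(\overline{J})^d\subseteq\overline{J^d}\subseteq J$, forcing $a\le d-1$.

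To finish, exploit the general choice of the $\tilde x_i$: extending them to a regular system of parameters $\tilde x_1,\ldots,\tilde x_d,z_1,\ldots,z_c$ of $S$ gives $S/J'\cong k[[z_1,\ldots,z_c]]/(\overline{f}_1,\ldots,\overline{f}_c)$, a zero-dimensional complete intersection with $\ord(\overline{f}_j)=\mu_j$. Under the generic assumption that the initial forms of the $\overline{f}_j$ form a regular sequence in $\mathrm{gr}_{(z)}k[[z]]$, a Hilbert series computation yields $e(R)=\ell(S/J')=\prod_j\mu_j$ and $a=\sum_j(\mu_j-1)$; combined with $a\le d-1$, this forces $\sum_j\mu_j\le d+c-1$, and hence (since $\mu_j\ge 2$) $c\le d-1$. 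AM-GM then gives
\[
e(R)=\prod_j\mu_j\le\left(\frac{\sum_j\mu_j}{c}\right)^{c}\le\left(\frac{d+c-1}{c}\right)^{c},
\]
and for integer $c\in\{1,\ldots,d-1\}$ the right-hand side is maximized at $c=d-1$ with value $2^{d-1}$. The main obstacle is the \emph{non-generic} case, where the initial forms of $f_1,\ldots,f_c$ fail to be a regular sequence in $\mathrm{gr}_\n(S)$: there the identifications $e(R)=\prod\mu_j$ and $a=\sum\mu_j-c$ need not hold, and bridging this via a Gr\"obner degeneration and upper semicontinuity of multiplicity is the delicate step.
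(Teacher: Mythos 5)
Your reduction misplaces where the conjecture should be applied, and the resulting bound is strictly weaker than what is needed; the subsequent attempt to recover via generic initial forms is, as you admit, an unresolved gap. Concretely: applying the conjecture in $S$ (a Cohen presentation of $R$, so $\dim S = n = d+c$) with $J' = (\tilde x_1,\ldots,\tilde x_d,f_1,\ldots,f_c)$ gives, via Example~\ref{maximal-threshold}(iii) and Brian\c{c}on--Skoda,
\[
e(R) \;\le\; \left(\frac{n+a}{n}\right)^{n} \;\le\; \left(1+\frac{d-1}{d+c}\right)^{d+c},
\]
and this is \emph{larger} than $2^{d-1}$ whenever $c\ge 1$: the function $(1+(d-1)/m)^m$ is increasing in $m$, equals $2^{d-1}$ at $m=d-1$, and your $m=d+c$ exceeds $d-1$ (e.g.\ $d=2$, $c=1$ gives $(4/3)^3 = 64/27 > 2$). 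So this route cannot close on its own. Your patch --- $e(R)=\prod_j\mu_j$, $a=\sum_j(\mu_j-1)$, AM--GM --- would actually yield the bound \emph{without} the conjecture, but it hinges on the initial forms of the $\overline f_j$ being a regular sequence in the associated graded ring, which you explicitly flag as the unfinished step, and it is genuinely unfinished: such genericity is not available in general.

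The paper avoids all of this by passing first to the Artinian quotient $A=R/J$ (an Artinian complete intersection, with $\m_A^d=0$ by Brian\c{c}on--Skoda, hence $s := \max\{r: \m_A^r\neq 0\} \le d-1$), and then taking a \emph{minimal} Cohen presentation $A = S/I$ with $I=(f_1,\ldots,f_n)$, $n=\operatorname{embdim}(A)$, and $\alpha_i=\ord(f_i)\ge 2$. The ingredient you are missing is the socle computation: writing $f_i=\sum_j a_{ij}y_j$, the determinant $\det(a_{ij})$ generates the socle of $A$ and has order at least $\sum_i(\alpha_i-1)\ge n$, forcing the key inequality $s\ge n$. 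That inequality points in the opposite direction from yours ($a < n$), and it is exactly what makes the AM--GM-type step
\[
\left(1+\frac{s}{n}\right)^{n} \le 2^{s}
\]
valid (monotonicity of $(1+s/m)^m$ in $m$, evaluated at $m=n\le s$). Combined with the conjecture $1=e(\n)\ge (n/(s+n))^n e(I)$ and $s\le d-1$, this gives $e(R)=e(I)\le 2^s\le 2^{d-1}$, with no genericity needed. You should redo the argument in the Artinian quotient and supply the socle-order bound.
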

\begin{proof}
Let $J \subseteq \m$ be a minimal reduction of $\m$. Note that $J$
is generated by a full system of parameters for $R$. The Brian\c
con-Skoda theorem for F-rational rings (or for rational
singularities), see \cite{HV} and \cite{AH}, gives $\m^d \subseteq
J$. Taking the quotient of $R$ by $J$, we reduce the assertion in
the proposition to the following claim:
\begin{cl}
Let $(A,\m)$ be a complete intersection Artinian local ring of
characteristic $p>0$ (resp. essentially of finite type over a field
of characteristic zero). If $s$ is the largest integer $s$ such that
$\m^s \neq 0$, then $e(A) \le 2^s$.
\end{cl}

We now show that the regular case of Conjecture~\ref{mult conj}
implies the claim in positive characteristic (the argument in
characteristic zero is entirely analogous).
 Write $A=S/I$, where $(S,\n)$ is an $n$-dimensional
regular local ring and $I \subseteq S$ is an ideal generated by a
full system of parameters $f_1, \dots, f_n$ for $S$. For every $i$,
we denote by $\alpha_i$ the order of $f_i$. We may assume that
$\alpha_i \ge 2$ for all $i$.

Let $\n=(y_1,\ldots,y_n)$, and let us  write $f_i=\sum_ja_{ij}y_j$.
A standard argument relating the Koszul complexes on the $f_i$ and,
respectively, the $y_i$, shows that ${\rm det}(a_{ij})$ generates
the socle of $A$. In particular, if
$$s:=\max\{r \in \N \mid \n^r \not\subseteq I\},$$
then  $s\ge \sum_{i=1}^n (a_i-1) \ge n$. On the other hand, it
follows from Example \ref{maximal-threshold} (iii) that
$\pt^I(\m)=s+n$ (the corresponding formula in characteristic zero is
an immediate consequence of the description of the multiplier ideals
of the ideal of a point). Applying Conjecture \ref{mult conj} to
$S$, we get
$$1=e(\n)\ge \left(\frac{n}{\pt^I(\m)} \right)^n e(I)=\left(\frac{n}{s+n} \right)^n e(I).$$
Note that $(n/(s+n))^n \ge (s/(s+s))^s=(1/2)^s$, because $s \ge n$.
Thus, we have $e(A)=e(I) \le 2^s$.
\end{proof}

\begin{prop}
If $(R,\m)$ is a one-dimensional analytically irreducible local
domain of characteristic $p>0$, and if $\a, J$ are $\m$-primary
ideals in $R$, then $$\pt^J(\a)=\frac{e(J)}{e(\a)}.$$ In particular,
Conjecture $\ref{mult conj}$ holds in $R$.
\end{prop}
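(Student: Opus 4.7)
The plan is to show that both $\pt^J(\a)$ and $e(J)/e(\a)$ coincide with a single valuation-theoretic ratio. Since $R\hookrightarrow\widehat{R}$ is faithfully flat and $(J\widehat{R})^{[q]}=J^{[q]}\widehat{R}$, we have $\nu_\a^J(q)=\nu_{\a\widehat{R}}^{J\widehat{R}}(q)$; together with $e(I)=e(I\widehat{R})$ for $\m$-primary $I$, this allows us to replace $R$ by $\widehat{R}$ and assume $R$ is complete. The integral closure $V$ of $R$ in $\mathrm{Frac}(R)$ is then a DVR and a module-finite extension of $R$. Let $\nu$ denote the corresponding valuation, extended to ideals by $\nu(I)=\min_{x\in I}\nu(x)$, so that $IV=\mathfrak{m}_V^{\nu(I)}$, and let $\mathfrak{c}=(R:_V V)=\mathfrak{m}_V^{c}$ be the conductor, with $c=\nu(\mathfrak{c})<\infty$.

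Writing $a=\nu(\a)$ and $b=\nu(J)$, we have $\nu(J^{[q]})=qb$. The key step is the sandwich
\[
ra<qb \;\Longrightarrow\; \a^r\not\subseteq J^{[q]}, \qquad
ra\geq qb+c \;\Longrightarrow\; \a^r\subseteq J^{[q]}.
\]
The first implication is immediate by extending to $V$ and comparing valuations. For the second,
\[
\a^r\subseteq\mathfrak{m}_V^{ra}\subseteq\mathfrak{m}_V^{qb+c}=(J^{[q]}V)\cdot\mathfrak{c}=J^{[q]}\cdot(V\mathfrak{c})\subseteq J^{[q]},
\]
using $V\mathfrak{c}=\mathfrak{c}\subseteq R$. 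Thus $qb/a-1\leq\nu_\a^J(q)<(qb+c)/a$, and dividing by $q$ and letting $q\to\infty$ yields $\pt_+^J(\a)=\pt_-^J(\a)=b/a$.

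To identify $b/a$ with $e(J)/e(\a)$, I would use that the integral closure of an $\m$-primary ideal $I$ equals $\overline{I}=IV\cap R=\{x\in R\mid\nu(x)\geq\nu(I)\}$, which depends only on $\nu(I)$. Since $\a^b$ and $J^a$ share the valuation $ab$, we obtain $\overline{\a^b}=\overline{J^a}$, and invariance of multiplicity under integral closure gives $e(\a^b)=e(J^a)$. In dimension one this reads $b\cdot e(\a)=a\cdot e(J)$, so $\pt^J(\a)=b/a=e(J)/e(\a)$. The case $d=1$ of Conjecture~\ref{mult conj} is the inequality $\pt^J(\a)\geq e(J)/e(\a)$, which therefore holds with equality.

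The step that requires genuine care, and that I expect to be the only real obstacle, is ensuring that $V$ is module-finite over the complete ring $R$, so that $\mathfrak{c}$ is a nonzero $\m$-primary ideal of both $R$ and $V$; this is precisely where analytic irreducibility is essential, as it guarantees that $\widehat{R}$ is a domain whose normalization is a single DVR. The remaining valuation-theoretic bookkeeping is routine and does not use the characteristic $p$ hypothesis beyond the mere definition of $J^{[q]}$.
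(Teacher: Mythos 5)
Your proof is correct and follows essentially the same route as the paper: pass to the completion, use the conductor of the module-finite DVR normalization to compare $\nu^J_\a(q)$ with its analogue over the DVR, and read off the threshold as a ratio of valuations. The only variation is in the final identification with $e(J)/e(\a)$, where you use $\overline{\a^b}=\overline{J^a}$ together with invariance of multiplicity under integral closure, whereas the paper directly invokes $e(I\overline{R})=\ord_{\overline{R}}(I\overline{R})$ and $e(I)=e(I\overline{R})$; both are standard and of comparable length.
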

\begin{proof}
By Proposition \ref{basic} (5), we may assume that $R$ is a complete
local domain. Since $R$ is one-dimensional,  the integral closure
$\overline{R}$ is a DVR.  Therefore we have
 $$\pt^{J \overline{R}}(\a
\overline{R})=\ord_{\overline{R}}(J\overline{R})/\ord_{\overline{R}}(\a
\overline{R}).$$
On the other hand, $e(J
\overline{R})=\ord_{\overline{R}}(J\overline{R})$ and $e(\a
\overline{R})=\ord_{\overline{R}}(\a \overline{R})$. Thus, by
Proposition \ref{basic} (6),
$$\pt^J(\a)=\pt^{J \overline{R}}(\a \overline{R})=\frac{e(J \overline{R})}{e(\a \overline{R})}=\frac{e(J)}{e(\a)}.$$
\end{proof}

\begin{thm}\label{diagonal}
If $(R,\m)$ is a regular local ring of characteristic $p>0$ and
$J=(x_1^{a_1}, \dots, x_d^{a_d})$, with $x_1, \dots, x_d$ a full
regular system of parameters for $R$, and with $a_1, \dots, a_d$
positive integers, then the inequality given by
Conjecture~$\ref{mult conj}$ holds.
\end{thm}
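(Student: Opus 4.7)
Following the strategy of \cite{TW} and \cite{dFEM}, I would first reduce to the case that $\a$ is a monomial ideal, then use the explicit description of the F-threshold from Example~\ref{maximal-threshold}(ii) together with the standard formula for the multiplicity of a monomial $\m$-primary ideal to convert the inequality into a convex-geometric estimate on the Newton polyhedron of $\a$. For the reduction, consider the one-parameter subgroup of the torus $(k^*)^d$ acting on $x_1, \ldots, x_d$ with a generic positive weight vector $w$; since $J = (x_1^{a_1}, \ldots, x_d^{a_d})$ is torus-invariant, it is fixed throughout the resulting flat degeneration whose generic fiber is $\a$ and whose special fiber is the monomial ideal $\a_0 := \mathrm{in}_w(\a)$. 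Flatness preserves length, so $e(\a_0) = e(\a)$. On the other hand, for each fixed $q = p^e$ and $r$, the condition $\a_t^r \subseteq J^{[q]}$ is closed in $t$, so $\nu^J_{\a_0}(q) \leq \nu^J_{\a}(q)$ for every $q$, yielding $\pt^J(\a_0) \leq \pt_-^J(\a)$ (the F-threshold of $\a_0$ exists by Example~\ref{maximal-threshold}(ii)). Thus it is enough to prove the inequality for the monomial ideal $\a_0$.

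Now suppose $\a$ is monomial, set $c := \pt^J(\a)$, and put $b_i := a_i/c$. By Example~\ref{maximal-threshold}(ii), the point $(b_1, \ldots, b_d)$ lies on the boundary $\partial P(\a)$ of the Newton polyhedron of $\a$. Since $\a$ is $\m$-primary, each $b_i > 0$, so this point lies on a bounded facet of $P(\a)$ and admits a supporting hyperplane $H = \{\sum_{i=1}^d \alpha_i x_i = \sum_{i=1}^d \alpha_i b_i\}$ with all $\alpha_i > 0$ (the bounded facets of a Newton polyhedron of an $\m$-primary monomial ideal have strictly positive outward normals). Using the standard formulas $e(\a) = d! \cdot \mathrm{vol}(\R_{\geq 0}^d \setminus P(\a))$ and $e(J) = a_1 \cdots a_d$, the desired inequality is equivalent to
\[
\mathrm{vol}(\R_{\geq 0}^d \setminus P(\a)) \;\geq\; \frac{d^d}{d!} \prod_{i=1}^d b_i.
\]

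The hyperplane $H$ cuts off from $\R_{\geq 0}^d$ the simplex $S = \{x \in \R_{\geq 0}^d : \sum_i \alpha_i x_i \leq \sum_i \alpha_i b_i\}$, whose interior is disjoint from $P(\a)$, and therefore $\mathrm{vol}(\R_{\geq 0}^d \setminus P(\a)) \geq \mathrm{vol}(S) = \frac{(\sum_i \alpha_i b_i)^d}{d! \prod_i \alpha_i}$. Applying AM-GM to the positive numbers $\alpha_1 b_1, \ldots, \alpha_d b_d$ yields $(\sum_i \alpha_i b_i)^d \geq d^d \prod_i (\alpha_i b_i)$, and after cancelling $\prod_i \alpha_i$ we obtain $\mathrm{vol}(S) \geq \frac{d^d}{d!} \prod_i b_i$, exactly as required. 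The main obstacle I anticipate is making the degeneration step fully rigorous in the local setting---specifically, constructing the flat family $\{\a_t\}$ and verifying the semicontinuity statement $\pt^J(\a_0) \leq \pt_-^J(\a)$---while the convex-geometric finale is a clean application of the supporting hyperplane theorem and AM-GM.
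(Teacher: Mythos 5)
Your convex-geometric finale (supporting hyperplane at $(a_1/c,\dots,a_d/c) \in \partial P(\a)$, volume of the cut-off simplex, AM--GM) is correct and is exactly what the paper does. The reduction to the monomial case, however, contains a genuine gap: the claim that ``flatness preserves length, so $e(\a_0) = e(\a)$'' is false. Flatness of the Gr\"obner degeneration gives $\ell_R(R/\a) = \ell_R(R/\a_0)$, but the Hilbert--Samuel multiplicity is computed from $\ell_R(R/\a^n)$ for all $n$, and $\a_0^n = \mathrm{in}_w(\a)^n$ is in general a \emph{proper} subideal of $\mathrm{in}_w(\a^n)$. This yields only $\ell_R(R/\a_0^n) \ge \ell_R(R/\a^n)$, hence $e(\a_0) \ge e(\a)$ with strict inequality possible --- and that is the wrong direction for your argument. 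For a concrete example, with $\a = (x^2, xy+y^2) \subseteq k[x,y]_{(x,y)}$ and a graded-lex order one has $e(\a) = 4$ but $\mathrm{in}(\a) = (x^2, xy, y^3)$ has $e(\mathrm{in}(\a)) = 5$. So from $e(\a_0) \ge (d/\pt^J(\a_0))^d e(J)$ and $\pt^J(\a_0) \le \pt_{-}^J(\a)$ you cannot conclude anything about $e(\a)$.

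The paper avoids this by proving the stronger \emph{length} inequality
\[
\ell_R(R/\a) \;\ge\; \frac{1}{d!}\left(\frac{d}{\pt^J(\a)}\right)^d e(J)
\]
for every $\m$-primary $\a$. Length \emph{is} preserved under the Gr\"obner degeneration, and for a monomial ideal one has $\ell_R(R/\a_0) \ge \mathrm{Vol}\bigl(\R^d_{\ge 0} \setminus P(\a_0)\bigr)$ (an inequality, not an equality, since $\a_0$ need not be integrally closed); your hyperplane/AM--GM computation then bounds the right-hand side exactly as needed. The multiplicity statement is recovered by applying the length bound to $\a^n$, using $\pt^J(\a^n) = \pt^J(\a)/n$ (Proposition~\ref{basic}(3)), dividing by $n^d/d!$, and letting $n \to \infty$. (Alternatively, the remark after Theorem~\ref{diagonal} cites Mumford's inequality $\ell_R(R/\a) \ge e(\a)/d!$ to show the two formulations are equivalent.) Your semicontinuity $\pt^J(\a_0) \le \pt_{-}^J(\a)$ is correct, but the ``closed in $t$'' phrasing is not quite the right justification; the clean argument is that $\a^r \subseteq J^{[q]}$ implies $\mathrm{in}_w(\a)^r \subseteq \mathrm{in}_w(\a^r) \subseteq \mathrm{in}_w(J^{[q]}) = J^{[q]}$ since $J^{[q]}$ is monomial, which directly gives $\nu^J_{\a_0}(q) \le \nu^J_{\a}(q)$. (The paper instead cites de Fernex's containment $\tau(\mathrm{in}_\lambda(\a)^t) \subseteq \mathrm{in}_\lambda(\tau(\a^t))$ together with Proposition~\ref{regular}; your elementary route is an acceptable substitute.)
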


\begin{proof}
The proof follows the idea in \cite{dFEM} and \cite{TW}, reducing
the assertion to the case when $\a$ is a monomial ideal, and then
using the explicit description of the invariants involved. We have
by definition $e(\a)=\lim_{n \to \infty}\frac{d! \cdot
\ell_R(R/\a^n)}{n^d}$, hence it is enough to show that for every
$\m$-primary ideal $\a$ of $R$,
\begin{equation}\label{eq_diagonal}
\ell_R(R/\a) \ge \frac{1}{d!} \left(\frac{d}{\pt^J(\a)} \right)^d
e(J).
\end{equation}
 After passing to completion and using Proposition~\ref{basic}
(5) and Remark~\ref{localization}, we see that it is enough to prove
the inequality (\ref{eq_diagonal}) in the case when $R=k[x_1, \dots,
x_d]$, $\m=(x_1,\ldots,x_d)$, $\a$ is $\m$-primary, and
$J=(x_1^{a_1}, \dots, x_d^{a_d})$.

Note that $e(J)=a_1 \cdots a_d$. We fix a monomial order $\lambda$
on the monomials in the polynomial ring, and use it to take a
Gr\"{o}bner deformation of $\a$, see \cite[Ch. 15]{eisenbud}. This
is a flat family $\{\a_s\}_{s \in k}$ such that $R/\a_s \cong R/\a$
for all $s \ne 0$, and such that $\a_0=\mathrm{in}_{\lambda}(\a)$,
the initial ideal of $\a$.

If $I$ is an ideal generated by monomials, we denote by $P(I)$ the
Newton polyhedron of $I$ (see Example~\ref{maximal-threshold} (2)
for definition). We also put ${\rm Vol}(P)$ for the volume of a
region $P$ in $\R^n$, with the Euclidean metric. Since the
deformation we consider is flat, it follows that
$\mathrm{in}_{\lambda}(\a)$ is also $\m$-primary and
$$\ell_R(R/\a)=\ell_R(R/\mathrm{in}_{\lambda}(\a)) \ge \mathrm{Vol}\left(\R^d_{\ge 0} \smallsetminus P(\mathrm{in}_{\lambda}(\a))\right),$$
where the inequality follows from \cite[Lemma 1.3]{dFEM}.

On the other hand, by \cite[Prop. 5.3]{dF}, we have
$\tau(\mathrm{in}_{\lambda}(\a)^t) \subseteq
\mathrm{in}_{\lambda}(\tau(\a^t))$ for all $t >0$. This implies that
$\pt^J(\a) \ge
\pt^{\mathrm{in}_{\lambda}(J)}(\mathrm{in}_{\lambda}(\a))$. Note
also that since $J$ is generated by monomials, we have
$\mathrm{in}_{\lambda}(J)=J$. Thus, we can reduce to the case when
$\a$ is generated by  monomials in $x_1, \dots, x_d$. That is, it is
enough to show that for every $\m$-primary monomial ideal $\a
\subseteq R$,
$$\mathrm{Vol}\left(\R^d_{\ge 0} \smallsetminus P(\a)\right) \ge \frac{1}{d!}\left(\frac{d}{\pt^J(\a)} \right)^d a_1 \cdots a_d.$$

It follows from the description of $\pt^J(\a)$ in
Example~\ref{maximal-threshold} (2) that we have $(a_1,\ldots,a_d)
\in \partial(\pt^J(a)\cdot P(\a))$. We can find a hyperplane
$H_q:=u_1/b_{1}+ \dots +u_d/b_{d}=1$ passing through the point
$(a_1, \dots, a_d)$ such that
$$H^+:=\left\{(u_1, \dots, u_d) \in \R^d_{\ge 0} \mid \frac{u_1}{b_{1}}+ \dots +\frac{u_d}{b_{d}} \ge 1 \right\} \supseteq \pt^J(\a)\cdot P(\a).$$
Therefore, we have
$$\mathrm{Vol}\left(\R^d_{\ge 0}\smallsetminus P(\a)\right) \ge \mathrm{Vol}\left(\R^d_{\ge 0}\smallsetminus \frac{1}{\pt^J(\a)}H^+\right)
=\frac{b_{1} \dots b_{d}}{d!\cdot \pt^J(\a)^d}.$$ On the other hand,
since $H$ passes through $(a_1,\ldots,a_d)$, it follows that
 $a_1/b_{1}+ \cdots +a_d/b_{d}=1$.
Comparing the arithmetic and geometric means of $\{a_i/b_{i}\}_i$,
we see that
$$b_{1} \cdots b_{d}  \ge d^d \cdot a_1 \cdots a_d.$$
Thus, combining these two inequalities, we obtain that
$$\mathrm{Vol}\left(\R^d_{\ge 0}\smallsetminus P(\a)\right) \ge\frac{b_{1} \cdots b_{d}}{d!\cdot \pt^J(\a)^d}
\ge \frac{1}{d!}\left(\frac{d}{\pt^J(\a)}\right)^d a_1 \cdots a_d,
$$ as required.
\end{proof}

\begin{rem}
It might seem that in the above proof we have shown a stronger
assertion than the one in Conjecture~\ref{mult conj}, involving the
length instead of the multiplicity.
 However, the two assertions are equivalent: this follows from
 \cite[Corollary 3.8]{Mum} which says that for every zero-dimensional ideal $\a$ in a
 $d$-dimensional regular local ring $R$, we have
 $$\ell_R(R/\a)\geq\frac{e(\a)}{d!}.$$
\end{rem}

We can prove a graded version of Conjecture \ref{mult conj}. In
fact, we prove a more precise statement, which is valid
independently of the characteristic.

\begin{thm}\label{homogeneous}
Let $R=\bigoplus_{d \ge 0} R_d$ be an $n$-dimensional graded
Cohen-Macaulay ring with $R_0$ a field of arbitrary characteristic.
If $\a$ and $J$ are ideals generated by full homogeneous systems of
parameters for $R$, and if $\a^N\subseteq J$, then
$$e(\a) \ge \left(\frac{n}{n+N-1}\right)^n e(J).$$
\end{thm}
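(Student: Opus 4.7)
My plan is to translate the multiplicity inequality into a length inequality, then pass to the associated graded ring of $\a$ and use a top-degree argument. Since $R$ is Cohen--Macaulay with homogeneous hsop $\{f_i\}$ of degrees $a_i := \deg f_i$, it is a finite free module over $A = k[f_1,\ldots,f_n]$; writing $H_R(t) = P(t)/(1-t)^n$, the standard identities $e(\a) = \ell(R/\a) = e(R)\prod_i a_i$ and $e(J) = \ell(R/J) = e(R)\prod_j b_j$ (where $b_j := \deg g_j$) reduce the conjecture to $\prod_j b_j \le \left(\frac{N+n-1}{n}\right)^n \prod_i a_i$.

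The key construction is the associated graded ring $G := \operatorname{gr}_\a R \cong (R/\a)[y_1,\ldots,y_n]$, a polynomial ring in variables $y_i$ of $y$-degree $1$ over the Artinian ring $R/\a$; being a polynomial ring over an Artinian (hence CM) base, $G$ is CM of Krull dimension $n$. For each generator $g_j$ of $J$, set $w_j := \ord_\a(g_j)$ and let $\overline{g_j}$ denote its initial form in $G$, a nonzero homogeneous element of $y$-degree $w_j$. The crucial step I would prove is the inclusion $(y_1,\ldots,y_n)^N \subseteq (\overline{g_1},\ldots,\overline{g_n})$ in $G$: for any $f$-monomial $\prod f_i^{e_i}$ with $\sum e_i = N$, which lies in $\a^N \subseteq J$, write it as $\sum h_j g_j$ in $R$ and extract the $\a^N/\a^{N+1}$-component using the $\a$-adic decomposition of each $h_j$; this expresses $\prod y_i^{e_i}$ as a $G$-linear combination of the $\overline{g_j}$.

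Granting this inclusion, $(\overline{g_i})$ is $(y)$-primary and generated by $n$ elements in the CM ring $G$ of Krull dimension $n$, hence a hsop and therefore a regular sequence; the Hilbert series of a graded complete intersection gives $\ell(G/(\overline{g_i})) = r\prod_i w_i$, where $r := \ell(R/\a) = e(R)\prod_i a_i$. Since $\operatorname{gr}_\a J \supseteq (\overline{g_i})$ and $\ell(R/J) = \ell(G/\operatorname{gr}_\a J)$, this yields $\prod_j b_j \le \prod_i a_i \prod_i w_i$. Finally, the top-$y$-degree inequality for the surjection $G/(y)^N \twoheadrightarrow G/(\overline{g_i})$ gives $\sum_i(w_i-1) \le N-1$, so $\sum_i w_i \le N+n-1$, and AM--GM yields $\prod_i w_i \le \left(\frac{N+n-1}{n}\right)^n$, completing the bound. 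The main obstacle is establishing the inclusion $(y_1,\ldots,y_n)^N \subseteq (\overline{g_1},\ldots,\overline{g_n})$; the remaining steps are routine applications of CM graded commutative algebra together with AM--GM.
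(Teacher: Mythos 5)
The overall reduction to $\prod_j d_j \le \left(\frac{N+n-1}{n}\right)^n \prod_i a_i$ and the use of $G = \operatorname{gr}_\a R \cong (R/\a)[y_1,\ldots,y_n]$ are sensible, but the key claim $(y_1,\ldots,y_n)^N \subseteq (\overline{g_1},\ldots,\overline{g_n})$ is \emph{false} in general, and the ``extract the $\a^N/\a^{N+1}$-component'' argument breaks precisely because the initial forms of the terms $h_j g_j$ can cancel.

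Concrete counterexample: let $R = k[x,y]$, $\a = (x^2, y)$ (a homogeneous hsop of degrees $2,1$), and $J = (x^4 + y^4,\ x^2 y)$ (a homogeneous hsop of degrees $4,3$). Then $R/\a = k[x]/(x^2)$ and $G = k[u,v]$ with $u$ the image of $x^2$ and $v$ the image of $y$. Both generators of $J$ have $\a$-order $2$, with $\overline{g_1}=u^2$ and $\overline{g_2}=uv$, so $(\overline{g_1},\overline{g_2})=(u^2,uv)=u\cdot(u,v)$, which is \emph{not} $(u,v)$-primary: no power of $v$ lies in it, so $(u,v)^N \not\subseteq (\overline{g_1},\overline{g_2})$ for any $N$. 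Meanwhile $\a^5\subseteq J$ (and $\a^4\not\subseteq J$, since $y^4\notin J$), so $N=5$. To see where the extraction fails, note $y^5 = y\,g_1 - x^2\,g_2$: here $\ord_\a(y\,g_1)=\ord_\a(x^2 g_2)=3$ with initial forms $u^2v$ and $u^2v$, which cancel, and $\ord_\a(y^5)=5$; there is no $G$-linear relation putting $v^5$ into $(u^2,uv)$. Your intermediate bound $\prod d_j \le (\prod a_i)(\prod w_i)$ would read $12\le 2\cdot 4$, which is false (the theorem's conclusion $2\ge (2/6)^2\cdot 12 = 4/3$ does hold, so only the method fails). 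In essence, $\a^N\subseteq J$ controls only $\operatorname{gr}_\a J$, not the subideal generated by the initial forms of a chosen generating set, and the two can differ badly even for homogeneous complete intersections.

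The paper's proof sidesteps this issue entirely. Rather than taking $\a$-adic orders $w_j$ of the generators of $J$, it inductively defines integers $t_1,\ldots,t_{n-1}$ by looking at powers of the monomials $x_1^{t_1-1}\cdots x_{i-1}^{t_{i-1}-1}x_i^t$ inside $J$, then compares the Koszul resolution of $R/J$ with a Taylor-type resolution of $R/(x_1^{t_1},\, x_1^{t_1-1}x_2^{t_2},\,\ldots)$ to get degree inequalities $t_1 a_1+\cdots+t_i a_i\ge d_1+\cdots+d_i$, plus one further inequality coming from the socle generator $D$ of $R/(x_1^N,\ldots,x_n^N)$ modulo $J$. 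A weighted-sum lemma and AM--GM then finish. If you want to repair your approach, you would need to replace $(\overline{g_1},\ldots,\overline{g_n})$ by a genuine parameter ideal inside $\operatorname{gr}_\a J$ with controlled degrees --- which is essentially what the paper's inductively chosen monomials provide.
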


\begin{cor}\label{cor_homogeneous}
Let $R$ be as in the theorem, with ${\rm char}(R_0)=p>0$. If $\a$
and $J$ are ideals generated by full homogeneous systems of
parameters for $R$, then
$$e(\a)\ge\left(\frac{n}{\pt_{-}^J(\a)}\right)^n e(J).$$
\end{cor}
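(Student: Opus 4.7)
The plan is to reduce the corollary to the characteristic-free Theorem~\ref{homogeneous} by applying it to the pair $(\a,J^{[q]})$ for each $q=p^e$ and then letting $q\to\infty$. The key idea is that the combinatorial exponent $N$ appearing in Theorem~\ref{homogeneous} can be taken to be $\nu_\a^J(q)+1$ when the second ideal is $J^{[q]}$, while $e(J^{[q]})$ is exactly $q^n e(J)$; dividing through by $q^n$ transforms the coarse bound into one governed by the asymptotic invariant $\pt_-^J(\a)$.

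Concretely, fix $q=p^e$ and set $N_q:=\nu_\a^J(q)+1$, so that by definition of $\nu_\a^J(q)$ we have $\a^{N_q}\subseteq J^{[q]}$. If $J=(f_1,\dots,f_n)$ with $f_1,\dots,f_n$ a homogeneous system of parameters of degrees $d_i$, then $J^{[q]}=(f_1^q,\dots,f_n^q)$ is again generated by a full homogeneous system of parameters. Hence Theorem~\ref{homogeneous} applies to the pair $(\a,J^{[q]})$ and yields
$$e(\a)\ \geq\ \left(\frac{n}{n+N_q-1}\right)^{\!n}e(J^{[q]}).$$
The standard identity $e(J^{[q]})=q^n\,e(J)$---the multiplicity of a homogeneous parameter ideal in a graded Cohen--Macaulay $k$-algebra equals the product of the degrees of its generators times $e(R)$, and raising each generator to the $q$-th power multiplies this by $q^n$---then gives, for every $q=p^e$,
$$e(\a)\ \geq\ \left(\frac{nq}{n+\nu_\a^J(q)}\right)^{\!n}e(J).$$

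To finish, I would pass to the limit supremum in $q$ of the right-hand side. Since $n/q\to 0$ and $\liminf_q \nu_\a^J(q)/q=\pt_-^J(\a)$ (which is positive, because both $\a$ and $J$ are $\m$-primary, giving $J^s\subseteq\a$ for some $s$ and hence $\pt_-^J(\a)\geq n/s>0$ via Proposition~\ref{basic}(2),(3) and Example~\ref{maximal-threshold}(i)), the limsup of $nq/(n+\nu_\a^J(q))$ equals $n/\pt_-^J(\a)$, and the corollary follows. There is no substantive obstacle once Theorem~\ref{homogeneous} is available: the argument is a pure Frobenius-rescaling, and the only minor points are that Frobenius powers preserve the property of being generated by a homogeneous system of parameters and that they multiply the multiplicity by $q^n$---both immediate from the Hilbert-series description of multiplicity in a graded Cohen--Macaulay ring.
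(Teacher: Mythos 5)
Your proof is correct and follows exactly the same strategy as the paper's: apply Theorem~\ref{homogeneous} to the pair $(\a,J^{[q]})$ with $N=\nu_\a^J(q)+1$, use $e(J^{[q]})=q^n e(J)$, and pass to a subsequence realizing $\pt_-^J(\a)$. The only difference is that you spell out the positivity of $\pt_-^J(\a)$ and the Hilbert-series argument behind $e(J^{[q]})=q^n e(J)$, which the paper leaves implicit.
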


\begin{proof}
Note that each $J^{[q]}$ is again generated by a full homogeneous
systems of parameters. It follows from the theorem and from the
definition of $\nu^{J}_{\a}(q)$ that for every $q=p^e$ we have
$$e(\a)\geq\left(\frac{n}{n+\nu^J_{\a}(q)}\right)^ne(J^{[q]})=
\left(\frac{qn}{n+\nu^J_{\a}(q)}\right)^n e(J).$$ On the right-had
side we can take a subsequence converging to
$\left(\frac{n}{\pt_{-}^J(\a)}\right)^n e(J)$, hence we get the
inequality in the corollary.
\end{proof}

\begin{proof}[Proof of Theorem~\ref{homogeneous}]
Suppose that $\a$ is generated by a full homogeneous system of
parameters $x_1, \dots, x_n$ of degrees $a_1 \le \dots \le a_n$, and
that $J$ is generated by another homogeneous system of parameters
$f_1, \dots, f_n$ of degrees $d_1\leq\dots\leq d_n$. Define
nonnegative integers $t_1, \dots, t_{n-1}$ inductively as follows:
$t_1$ is the smallest integer $t$ such that $x_1^t \in J$. If $2\leq
i\leq n-1$, then $t_i$ is the smallest integer $t$ such that
$x_1^{t_1-1} \cdots x_{i-1}^{t_{i-1}-1}x_i^t \in J$. Note that we
have by assumption $N\geq t_1+\dots+t_{n-1}-n+1$.

We first show the following inequality for every $i=1, \dots, n-1$:
\begin{equation}\label{eq_homogeneous2}
t_1 a_1+\cdots+t_{i} a_i \geq d_1+\cdots+d_i.
\end{equation}
Let $I_i$ be the ideal of $R$ generated by $x_1^{t_1},
x_1^{t_1-1}x_2^{t_2}, \ldots, x_1^{t_1-1} \cdots
x_{i-1}^{t_{i-1}-1}x_i^{t_i}$. Note that the definition of the
integers $t_j$ implies that $I_i \subseteq J$. The natural
surjection of $R/I_i$ onto $R/J$ induces a comparison map between
their free resolutions (we resolve $R/J$ by the Koszul complex, and
$R/I_i$ by a Taylor-type complex). Note that the $i^{\rm th}$ step
in the Taylor complex for the monomials
$X_1^{t_1},X_1^{t_1-1}X_2^{t_2},\ldots,X_1^{t_1-1}\cdots
X_{i-1}^{t_{i-1}-1}X_i^{t_i}$ in a polynomial ring with variables
$X_1,\ldots,X_n$, is a free module of rank one, with a generator
corresponding to the monomial
$${\rm lcm}(X_1^{t_1},X_1^{t_1-1}X_2^{t_2},\ldots,X_1^{t_1-1}\cdots
X_{i-1}^{t_{i-1}}X_i^{t_i})=X_1^{t_1}\cdots
X_{i-1}^{t_{i-1}}X_i^{t_i}$$ (see \cite[Exercise 17.11]{eisenbud}).
It follows that the map between the $i^{\rm th}$ steps in the
resolutions of $R/I_i$ and $R/J$ is of the form
$$R(-t_1 a_1-\cdots-t_{i} a_i) \to \bigoplus_{1 \le v_1 < \dots < v_i \le n}R(-d_{v_1}-\dots-d_{v_i}).$$
In particular, unless this map is zero, we have
$$t_1
a_1+\cdots+t_{i} a_i\geq\min_{1 \le v_1 < \dots < v_i \le
n}(d_{v_1}+\cdots+d_{v_i})=d_1+\cdots+d_i.$$ We now show that this
map cannot be zero. If it is zero, then also the induced map
\begin{equation}\label{eq_homogeneous1}
\Tor_i^R(R/I_i, R/\b_i) \to \Tor_i^R(R/J, R/\b_i)
\end{equation}
 is zero, where $\b_i$ is the ideal generated by $x_1, \dots, x_i$. On the other
hand, using the Koszul complex on $x_1, \dots, x_i$ to compute the
above $\Tor$ modules, we see that the map (\ref{eq_homogeneous1})
can be identified with the natural map
$$(I_i\colon \b_i)/I_i \to (J\colon \b_i)/J.$$
Since $x_1^{t_1-1}\cdots x_i^{t_i-1}\in (I_i\colon \b_i)$, it
follows that
 $x_1^{t_1-1} \cdots x_i^{t_i-1}$
lies in $J$, a contradiction. This proves (\ref{eq_homogeneous2}).

We next prove the following inequality:
\begin{equation}\label{eq_homogeneous3}
t_1 a_1+\cdots+t_{n-1} a_{n-1}+(N-t_1-\dots-t_{n-1}+n-1)a_n \ge
d_1+\cdots+d_n.
\end{equation}
Since $\a^{N} \subseteq J$, we have
\begin{equation}\label{eq_homogeneous4}
(x_1^{N}, \dots, x_n^{N})\colon J \subseteq (x_1^{N}, \dots,
x_n^{N})\colon \a^{N}=(x_1^{N}, \dots, x_n^{N})+\a^{(n-1)(N-1)}.
\end{equation}
 On the other hand, the ideal $(x_1^{N}, \dots,
x_n^{N})\colon J$ can be described as follows. If we write
$x_i^N=\sum_{j=1}^nb_{ij}f_j$, then using the Koszul resolutions of
$R/J$ and $R/(x_1^N,\ldots,x_n^N)$ one sees that multiplication by
$D={\rm det}(b_{ij})$ gives an injection $R/J\hookrightarrow
R/(x_1^N,\ldots,x_n^N)$, hence $J=(x_1^N,\ldots,x_n^N)\colon D$.
Moreover, we also get
$$(x_1^N,\ldots,x_n^N)\colon
J=(x_1^N,\ldots,x_n^N,D)$$ (see, for example, \cite[Prop. 2.6]{PS};
note that the statement therein requires $R$ to be regular, but this
condition is not used). It follows from the above description that
$D$ is homogeneous, and $\deg(D)= N(a_1+\dots+a_n)-(d_1+\dots+d_n)$.

It follows from (\ref{eq_homogeneous4}) that after possibly adding
to $D$ an element in $(x_1^N,\ldots,x_n^N)$, we may write
$$D=\sum_{m_1+\dots+m_n=(n-1)(N-1)} c_{m_1 \dots m_n} x_1^{m_1} \dots x_{n}^{m_{n}},$$
where all $c_{m_1,\ldots,m_n}$ are homogeneous. Since $x_1^{t_1-1}
\cdots x_{n-1}^{t_{n-1}-1}$ is not in $J=(x_1^N,\ldots,x_n^N)\colon
D$, we see that
$$D\not\in (x_1^N,\ldots,x_n^N)\colon x_1^{t_1-1}\cdots
x_{n-1}^{t_{n-1}-1}= (x_1^{N-t_1+1}, \dots, x_{n-1}^{N-t_{n-1}+1},
x_n^{N}).$$
 Thus there is some $(m_1,\ldots,m_n)$ with $\sum_jm_j=(n-1)(N-1)$
 and $m_j\leq N-t_j$ for all $j\leq n-1$, such that $c_{m_1\ldots
 m_m}\neq 0$. We deduce that the degree of $D$ is at least as large
as the smallest degree of such a monomial $x_1^{m_1}\cdots
x_n^{m_n}$,
 hence
\begin{align*}
\deg D&=N(a_1+\dots+a_n)-(d_1+\dots+d_n)\\
&\ge
(N-t_{1})a_1+\dots+(N-t_{n-1})a_{n-1}+(t_1+\dots+t_{n-1}-n+1)a_n,
\end{align*}
which implies the inequality (\ref{eq_homogeneous3}).

To finish the proof, we will use the following claim.
\begin{cl}
Let $\alpha_i,\beta_i,\gamma_i$ be real numbers, for $1\leq i\leq
n$. If $1=\gamma_1\leq\gamma_2\leq\ldots\leq\gamma_n$, and if
$\gamma_1\alpha_1+\dots+\gamma_i\alpha_i \ge
\gamma_1\beta_1+\dots+\gamma_i\beta_i$ for all $i=1, \dots, n$, then
$\alpha_1+\dots+\alpha_n \ge \beta_1+\dots+\beta_n$.
\end{cl}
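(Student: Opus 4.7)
The plan is to prove this elementary inequality via Abel summation (summation by parts). Set $\delta_j := \alpha_j - \beta_j$ and $S_j := \sum_{i=1}^{j}\gamma_i \delta_i$, with the convention $S_0 := 0$. The hypothesis is exactly that $S_j \ge 0$ for $1 \le j \le n$, while the desired conclusion is $\sum_{j=1}^{n}\delta_j \ge 0$. Since $\gamma_1 = 1$ and the $\gamma_j$ are non-decreasing, we have $\gamma_j \ge 1 > 0$ for every $j$, so one may divide and write $\delta_j = (S_j - S_{j-1})/\gamma_j$.

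The key computation is the standard Abel rearrangement
$$\sum_{j=1}^{n}\delta_j \;=\; \sum_{j=1}^{n}\frac{S_j - S_{j-1}}{\gamma_j} \;=\; \frac{S_n}{\gamma_n} + \sum_{j=1}^{n-1} S_j\left(\frac{1}{\gamma_j} - \frac{1}{\gamma_{j+1}}\right).$$
On the right hand side, $1/\gamma_n > 0$, each difference $1/\gamma_j - 1/\gamma_{j+1}$ is non-negative because $0 < \gamma_j \le \gamma_{j+1}$, and each $S_j$ is non-negative by hypothesis. Therefore $\sum_{j=1}^{n}\delta_j \ge 0$, which is the claim.

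There is essentially no genuine obstacle here: summation by parts is precisely designed to convert a hypothesis on weighted partial sums into one on the unweighted total sum, provided the weights are positive and monotone in the right direction. To see how the claim feeds back into Theorem~\ref{homogeneous}, I expect to apply it with $\gamma_j = a_j/a_1$ (non-decreasing because $a_1 \le \cdots \le a_n$), with $\alpha_j = t_j$ for $j < n$ and $\alpha_n = N - \sum_{j<n} t_j + n-1$, and with $\beta_j = d_j/a_j$; inequalities (\ref{eq_homogeneous2}) and (\ref{eq_homogeneous3}) then translate into the hypotheses of the claim, and its conclusion reads $\sum_{j=1}^{n} d_j/a_j \le n+N-1$. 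A final application of the AM--GM inequality to the positive ratios $d_j/a_j$, together with the multiplicity formula $e(\a)/e(J) = \prod_j a_j/\prod_j d_j$ for parameter ideals in a graded Cohen--Macaulay $k$-algebra, then yields $e(\a) \ge (n/(n+N-1))^n\, e(J)$ and completes the proof of the theorem.
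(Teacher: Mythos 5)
Your proof is correct, and it takes a genuinely different route from the paper's. The paper proves the claim by induction on $n$: if all $\lambda_i := \alpha_i - \beta_i$ are nonnegative there is nothing to prove, and otherwise it picks an index $i\ge 2$ with $\lambda_i<0$, uses $\gamma_i\lambda_i\le\gamma_{i-1}\lambda_i$ to merge $\lambda_{i-1}$ and $\lambda_i$ into a single term with weight $\gamma_{i-1}$, and passes to a sequence of length $n-1$. You instead apply Abel summation directly: with $S_j=\sum_{i\le j}\gamma_i\delta_i$ and $S_0=0$, you write
$$\sum_{j=1}^{n}\delta_j=\frac{S_n}{\gamma_n}+\sum_{j=1}^{n-1}S_j\Bigl(\frac{1}{\gamma_j}-\frac{1}{\gamma_{j+1}}\Bigr),$$
and observe every term on the right is nonnegative because $S_j\ge 0$ and $0<\gamma_j\le\gamma_{j+1}$. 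The identity is easily checked by reindexing, and the positivity of $\gamma_j$ (guaranteed by $\gamma_1=1$ and monotonicity) is exactly what is needed to justify the division and the sign of $1/\gamma_j - 1/\gamma_{j+1}$. Your approach is arguably more transparent and ``one-shot'': it exhibits the target sum explicitly as a nonnegative combination of the hypothesized nonnegative partial sums, whereas the paper's induction achieves the same thing implicitly one merge at a time. Both are fully elementary; the Abel version is the more standard tool for converting weighted-partial-sum hypotheses into an unweighted conclusion. Your sketch of how the claim feeds back into the proof of Theorem~\ref{homogeneous} also matches the paper's use exactly.
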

\begin{proof}[Proof of Claim]
Let $\lambda_i=\alpha_i-\beta_i$ for $1 \le i \le n$, so that
$\gamma_1\lambda_1+\dots+\gamma_i\lambda_i \ge 0$ for all $i=1,
\dots, n$. We prove that $\lambda_1+\dots+\lambda_n \ge 0$ by
induction on $n$, the case $n=1$ being trivial. Suppose that $n>1$
and that there is $i$ such that $\lambda_i<0$ (otherwise the
assertion to prove is clear). We must have $i \ge 2$,  and since
$\gamma_i \ge \gamma_{i-1}$, it follows that $\gamma_i \lambda_i \le
\gamma_{i-1}\lambda_i$. Let us put $\gamma_j'=\gamma_j$ for $1 \le j
\le i-1$ and $\gamma_j'=\gamma_{j+1}$ for $i \le j \le n-1$. Define
also $\lambda_j'=\lambda_j$ for $1 \le j \le i-2$,
$\lambda_{i-1}'=\lambda_{i-1}+\lambda_i$ and
$\lambda_j'=\lambda_{j+1}$ for $i \le j \le n-1$. It is
straightforward to check that
$\gamma_1'\lambda_1'+\dots+\gamma_{j}'\lambda_{j}' \ge 0$ for all
$j=1, \dots, n-1$, hence the induction hypothesis implies
$\lambda_1+\dots+\lambda_n=\lambda_1'+\dots+\lambda_{n-1}'\ge 0$.
\end{proof}

We now
 set $\alpha_i=t_i$ for $1 \le i \le n-1$ and
$\alpha_n=N-t_1-\dots-t_{n-1}+n-1$. We put $\beta_i=d_i/a_i$ and
$\gamma_i=a_i/a_1$ for $1 \le i \le n$. Since $a_1\leq\dots\leq
a_n$, we deduce $1=\gamma_1\leq\dots\leq\gamma_n$. Moreover, using
(\ref{eq_homogeneous2}) and (\ref{eq_homogeneous3}), we get
$\gamma_1\alpha_1+\dots+\gamma_i\alpha_i \ge
\gamma_1\beta_1+\dots+\gamma_i\beta_i$ for $1 \le i \le n$. Using
the above claim, we conclude that
$${N+n-1}=\alpha_1+\dots+\alpha_n \ge \beta_1+\dots +\beta_n=\left(\frac{d_1}{a_1}+\dots+\frac{d_n}{a_n}\right).$$
Comparing the arithmetic and geometric means of $\{d_i/a_i\}_i$, we
see that
$$(N+n-1)^n a_1 \dots a_n \ge n^n d_1 \dots d_n.$$
Since $e(\a)=a_1\cdots a_n$ and $e(J)=d_1\cdots d_n$, this concludes
the proof.
\end{proof}

When $J$ is not necessarily a parameter ideal, we can prove another
inequality involving the F-threshold $\pt^J(\a)$, generalizing the
results in \cite{dFEM} and \cite{TW}.
\begin{prop}\label{another}
If $(R,\m)$ is a $d$-dimensional regular local ring of
characteristic $p>0$, and if $\a, J$ are $\m$-primary ideals in $R$,
then we have the following inequality:
$$e(\a) \ge \left(\frac{d}{\pt^J(\a)}\right)^d (\pt^J(\m)-d+1).$$
\end{prop}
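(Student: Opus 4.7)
Following the approach of Theorem~\ref{diagonal}, the plan is first to reduce to the case $R=k[[x_1,\ldots,x_d]]$ with $\m=(x_1,\ldots,x_d)$ by completion (Proposition~\ref{basic}~(5)) and localization (Remark~\ref{localization}), both of which preserve $e(\a)$, $\pt^J(\a)$ and $\pt^J(\m)$. The next step, reducing to the case where both $\a$ and $J$ are monomial ideals, is the crux of the argument, and I would carry it out by a simultaneous Gr\"obner degeneration with respect to a fixed monomial order. Three facts make this reduction work: (a)~$e(\a)=e(\mathrm{in}(\a))$ by flatness of the Gr\"obner family; (b)~the integer $s:=\pt^J(\m)-d+1=\min\{k\mid\m^k\subseteq J\}$ is unchanged when $J$ is replaced by $\mathrm{in}(J)$, because $\m^k\subseteq J$ is equivalent to the Hilbert function of $R/J$ vanishing in degrees $\ge k$ (an application of Nakayama to the Artinian ring $R/J$), and the Hilbert function is a Gr\"obner invariant; (c)~by monotonicity of the initial-ideal operation combined with \cite[Proposition~5.3]{dF}, $\tau(\mathrm{in}(\a)^c)\subseteq\mathrm{in}(\tau(\a^c))\subseteq\mathrm{in}(J)$ whenever $\tau(\a^c)\subseteq J$, hence $\pt^{\mathrm{in}(J)}(\mathrm{in}(\a))\le \pt^J(\a)$. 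Putting these together, the inequality for $(\mathrm{in}(\a),\mathrm{in}(J))$ will imply the one for $(\a,J)$.

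In the monomial case I aim to establish the sharper length statement $\ell_R(R/\a)\ge d^d s/(d!\,\pt^J(\a)^d)$, from which the multiplicity bound follows by replacing $\a$ with $\a^n$ and letting $n\to\infty$ (using $\pt^J(\a^n)=\pt^J(\a)/n$, while $s$ depends only on $J$). Since $\m^{s-1}\not\subseteq J$ by definition of $s$, one can find $v_0\in\N^d$ with $|v_0|=s-1$ and $x^{v_0}\notin J$. Define $\lambda(v_0)$ by $v_0+e\in\partial(\lambda(v_0) P(\a))$, where $e=(1,\ldots,1)$; by Example~\ref{maximal-threshold}~(ii), $\lambda(v_0)\le \pt^J(\a)$. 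Choose a supporting hyperplane $H:\sum u_i/b_i=1$ to $\lambda(v_0) P(\a)$ passing through $v_0+e$, so that $\sum_i (v_{0,i}+1)/b_i=1$ and $P(\a)\subseteq \lambda(v_0)^{-1}H^+$.

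Applying the AM-GM inequality to the positive quantities $(v_{0,i}+1)/b_i$ gives $\prod_i b_i\ge d^d\prod_i(v_{0,i}+1)$; a short induction on $d$ (using $(a+1)(b+1)\ge a+b+1$) shows $\prod_i (v_i+1)\ge |v|+1$ for every $v\in\N^d$, so $\prod_i(v_{0,i}+1)\ge s$. Combining these with \cite[Lemma~1.3]{dFEM} yields
\[
\ell_R(R/\a)\ge \mathrm{Vol}\bigl(\R^d_{\ge 0}\setminus P(\a)\bigr)\ge \frac{\prod_i b_i}{\lambda(v_0)^d\,d!}\ge \frac{d^d s}{\lambda(v_0)^d\,d!}\ge \frac{d^d s}{\pt^J(\a)^d\,d!},
\]
as required. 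The main obstacle is the Gr\"obner reduction: both $\a$ and $J$ must be degenerated simultaneously, and one needs the full two-sided equality $s_{\mathrm{in}(J)}=s_J$ rather than just the trivial one-sided bound $s_{\mathrm{in}(J)}\le s_J$. It is precisely this Hilbert-function invariance of $s$ that makes passage to the initial ideal viable, in contrast with Theorem~\ref{diagonal}, where $J$ was already monomial to begin with.
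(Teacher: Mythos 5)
Your overall architecture matches the paper's (reduce to the polynomial ring, pass to monomial ideals, then run the Newton--polyhedron and AM--GM argument), and the monomial endgame you describe is essentially correct. But there is a genuine gap in step~(b), and it is precisely the point the paper's proof is built to circumvent.

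The assertion that $s_J:=\min\{k\mid\m^k\subseteq J\}$ is unchanged when $J$ is replaced by $\mathrm{in}_\lambda(J)$ is false for a general $\m$-primary ideal $J$ and a general monomial order $\lambda$. The quantity $s_J$ is controlled by the \emph{local} Hilbert function of $R/J$ (i.e.\ of $\mathrm{gr}_\m(R/J)$), and this is not a Gr\"obner invariant when $J$ is not homogeneous. Concretely, take $R=k[x,y]$, $\m=(x,y)$, and $J=(x-y^2,\,y^3)$. Then $R/J\cong k[y]/(y^3)$, and one checks that $y^2\notin J$ while $\m^3\subseteq J$, so $s_J=3$. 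However, for a degree-compatible order such as degrevlex with $x>y$, the leading form of $x-y^2$ is $y^2$, and a Buchberger computation gives $\mathrm{in}_\lambda(J)=(x^2,xy,y^2)=\m^2$, so $s_{\mathrm{in}_\lambda(J)}=2<3=s_J$. Since the reduction needs $s_{\mathrm{in}(J)}\ge s_J$ (the multiplicity inequality you are transporting has $s$ on the right-hand side), this inequality going the wrong way breaks the argument. The ``Hilbert function is a Gr\"obner invariant'' principle you invoke is only valid for \emph{homogeneous} ideals, where the graded Hilbert function of $R/J$ equals that of $R/\mathrm{in}_\lambda(J)$; for a nonhomogeneous $\m$-primary $J$ the standard Gr\"obner degeneration only preserves the total length $\ell(R/J)$, not the local Hilbert function.

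The paper repairs exactly this by a two-step deformation: first pass from $J$ to the tangent-cone ideal $J'$ (the ideal of lowest-degree forms), which is homogeneous and satisfies $\mathrm{gr}_\m(R/J)\cong R/J'$, and for which the equality $\m^r\subseteq J\Leftrightarrow\m^r\subseteq J'$ is proved by a small descending induction (using that $(J\cap\m^t)'=J'\cap\m^t$ and that $J\cap\m^t$ becomes homogeneous once $\m^{t+1}\subseteq J$); and only then apply the Gr\"obner degeneration $J'\mapsto \mathrm{in}_\lambda(J')$ to the homogeneous ideal $J'$, where your Hilbert-function argument is correct. In the example above, the tangent-cone ideal is $J'=(x,y^3)$, for which $s_{J'}=3=s_J$, and a subsequent Gr\"obner degeneration does nothing. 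Your proof should therefore be amended by inserting this tangent-cone step before the Gr\"obner degeneration, matching the paper's Claim inside Proposition~\ref{another}.

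Two smaller remarks: (i)~the bound you need is $s_{J_1}\ge s_J$, not $\le$; the latter is the trivial direction via $\mathrm{in}(\m^k)=\m^k$, and it is not the useful one; (ii)~in the monomial case your argument is slightly different in packaging from the paper's (you phrase it via $\prod(v_{0,i}+1)\ge|v_0|+1$, while the paper works directly with $r+1$ and the point $(r_1+1,\dots,r_d+1)\notin\mathrm{Int}(\pt^J(\a)\cdot P(\a))$ using the monomial description of $\tau(\a^t)$ from \cite{HY}), but the two are equivalent and both correct.
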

\begin{proof}
As in the proof of Theorem~\ref{diagonal}, we do a reduction to the
monomial case. We first see that it is enough to show that if
 $R$ is the polynomial ring $k[x_1,\ldots,x_d]$ and $\m=(x_1,\ldots,x_d)$, and $\a$, $J$
are $\m$-primary ideals, then
\begin{equation}\label{eq_reduce}
\ell(R/\a)\geq \frac{1}{d!} \left(\frac{d}{\pt^J(\a)}\right)^d
(\pt^J(\m)-d+1).
\end{equation}

\begin{cl}
We can find monomial ideals $\a_1$ and $J_1$
 such that
\begin{equation}\label{eq_reduce2}
 \ell_R(R/\a)=\ell_R(R/\a_1),\,\, \pt^{J}(\a)\geq
 \pt^{J_1}(\a_1),\,\,{\rm and}\,\,\pt^J(\m)=\pt^{J_1}(\m).
\end{equation}
\end{cl}

This reduces the proof of (\ref{eq_reduce})  to the case when both
$\a$ and $J$ are monomial ideals.

\begin{proof}[Proof of claim]
We do a two-step deformation to monomial ideals. We consider first a
flat deformation of $\a$ and $J$ to $\a'$ and $J'$, respectively,
where for an ideal $I\subseteq R$, we denote by $I'$ the ideal
defining the respective tangent cone at the origin. We then fix a
monomial order $\lambda$, and consider a Gr\"{o}bner deformation of
$\a'$ and $J'$ to $\a_1:={\rm in}_{\lambda}(\a')$ and $J_1:= {\rm
in}_{\lambda}(J')$, respectively. It follows as in the proof of
Theorem~\ref{diagonal} that the first two conditions in
(\ref{eq_reduce2}) are satisfied. For the third condition, in light
of Example~\ref{maximal-threshold} (3) it is enough to show that
$$\m^r\subseteq J\,\,{\rm iff}\,\,\m^r\subseteq {\rm
in}_{\lambda}(J).$$

It is clear that if $\m^r\subseteq J$, then $\m^r\subseteq J'$ and
$\m^r\subseteq J_1$. For the converse, suppose that $\m^r\subseteq
J_1$. Since $J'$ and $J_1$ are both homogeneous ideals, and since
$\dim_k(R/J_1)_r=\dim_k(R/J')_r$ (see \cite[Ch. 15]{eisenbud}), it
follows that $\m^r\subseteq J'$ (note that if $I$ is a homogeneous
ideal in $R$, then $\m^r\subseteq I$ if and only if $(R/I)_r=0$). We
know that $\m^s\subseteq J$ for some $s$, hence in order to prove
that $\m^r\subseteq J$ it is enough to show the following assertion:
if $\m^t\subseteq J'$ and $\m^{t+1}\subseteq J$, then $\m^t\subseteq
J$. It is easy to check that $(J\cap\m^t)'=J'\cap\m^t$, and since
$\m^{t+1}\subseteq J$, we see that $J\cap\m^t$ is homogeneous, hence
$$\m^t\subseteq J'\cap\m^t=(J\cap\m^t)'=J\cap\m^t.$$
\end{proof}

We return to the proof of Proposition~\ref{another}. From now on we
assume that $\a$ and $J$ are $\m$-primary monomial ideals. Arguing
as in the proof of Theorem~\ref{diagonal}, and using
Example~\ref{maximal-threshold} (3), we see that it is enough to
show
$$\mathrm{Vol}\left(\R^d_{\ge 0} \smallsetminus P(\a)\right) \ge \frac{1}{d!}\left(\frac{d}{\pt^J(\a)}\right)^d (r+1),$$
where $r:=\max\{s \in \Z_{\ge 0} \mid \m^s \not\subseteq J \}$. By
definition, we can choose a monomial $x_1^{r_1} \cdots x_d^{r_d}$ of
degree $r$ that is not contained in $J$. Since $\tau(\a^{\pt^J(\a)})
\subseteq J$ by Proposition \ref{regular}, this monomial cannot
belong to $\tau(\a^{\pt^J(\a)})$. Using the description of
generalized test ideals of monomial ideals (see
 \cite[Theorem 4.8]{HY}), this translates as
 $$(r_1+1,\ldots,r_d+1)\not\in {\rm Int}(\pt^J(\a)\cdot P(\a)).$$
Therefore we can find a hyperplane
$H:u_1/a_1+\dots+u_d/a_d=\pt^J(\a)$ passing through the point
$(r_1+1, \dots, r_d+1)$ such that
\begin{equation}\label{inclusion}
H^+:=\left\{(u_1, \dots, u_d) \in \R^d_{\ge 0} \mid
\frac{u_1}{a_{1}}+ \dots +\frac{u_d}{a_{d}} \ge\pt^J(\a) \right\}
\supseteq \pt^J(\a) \cdot P(\a).
\end{equation}
Note that we get
$\pt^J(\a)=(1+r_1)/a_1+\dots+(1+r_d)/a_d$. Comparing the arithmetic
and geometric means of $\{(1+r_i)/a_i\}_i$, we see that
$$\left(\frac{\pt^J(\a)}{d}\right)^d= \left( \frac{1+r_1}{da_1} + \dots + \frac{1+r_d}{da_d} \right)^d \ge
 \frac{(1+r_1) \dots (1+r_d)}{a_1 \dots a_d} \ge  \frac{1+r}{a_1 \dots  a_d}. $$
On the other hand, (\ref{inclusion}) implies
\begin{align*}
\mathrm{Vol}\left(\R^d_{\ge 0}\smallsetminus P(\a)\right) &\ge \mathrm{Vol}\left(\R^d_{\ge 0}\smallsetminus (1/{\pt^J(\a)})H^+\right)\\
&=\frac{a_1 \dots a_d}{d!}\\
&\ge \frac{1}{d!}\left(\frac{d}{\pt^J(\a)}\right)^d\left(r+1\right).
\end{align*}
\end{proof}

\bigskip

\begin{acknowledgement}
We thank all the participants in the AIM workshop ``Integral
Closure, Multiplier Ideals and Cores" for valuable comments.
Especially, we are indebted to Mel Hochster for the suggestion to
introduce the invariant $\widetilde{\pt}^J(\a)$. We are also
grateful to Tommaso de Fernex for suggesting Theorem~\ref{diagonal}.
%We are grateful to Takafumi Shibuta for his suggestion regarding the proof of Proposition \ref{another}.
The first author was partially supported by NSF grant DMS-0244405.
The second author was partially supported by NSF grant DMS-0500127
and by a Packard Fellowship. The third and fourth authors were
partially supported by Grant-in-Aid for Scientific Research 17740021
and 17540043, respectively. The third author was also partially
supported by Special Coordination Funds for Promoting Science and
Technology from the Ministry of Education, Culture, Sports, Science
and Technology of Japan.
\end{acknowledgement}

\end{document}